\title{Wall-Crossing in Genus Zero Landau-Ginzburg Theory}
\author{Dustin Ross}
\address{University of Michigan, Department of Mathematics, Ann Arbor, MI 48109, USA}
\email{dustyr@umich.edu}
\author{Yongbin Ruan}
\email{ruan@umich.edu}
\newcommand{\proj}{\mathbb{P}}
\newcommand{\Z}{\mathbb{Z}}
\newcommand{\M}{\overline{\mathcal{M}}}
\newcommand{\so}{\mathcal{O}}
\newcommand{\Q}{\mathbb{Q}}
\newcommand{\C}{\mathbb{C}}
\newcommand{\B}{\mathcal{B}}
\newcommand{\mult}{\text{mult}}
\newcommand{\nar}{\text{nar}}
\newcommand{\cH}{\mathcal{H}}
\newcommand{\cF}{\mathcal{F}}
\newcommand{\cL}{\mathcal{L}}
\newcommand{\cJ}{\mathcal{J}}
\newcommand{\cN}{\mathcal{N}}
\newcommand{\cD}{\mathcal{D}}
\newcommand{\bt}{\mathbf{t}}
\newcommand{\bu}{\mathbf{u}}
\newcommand{\bI}{\mathbb{I}}
\newcommand{\btau}{\bm\tau}
\newcommand{\cR}{\mathcal{R}}
\newcommand{\cW}{\mathcal{W}}
\newtheorem{dummy}{}[section]
\newtheorem{dum}{}[]
\newtheorem{lemma}[dummy]{Lemma}
\newtheorem{theorem}[dummy]{Theorem}
\newtheorem{corollary}[dummy]{Corollary}
\newtheorem{theorem1}[dum]{Theorem}
\newtheorem{corollary1}[dum]{Corollary}
\newtheorem*{theorem1'}{Theorem 1'}
\newtheorem*{theorem2'}{Theorem 2'}
\theoremstyle{definition}
\newtheorem{definition}[dummy]{Definition}
\newtheorem{remark}[dummy]{Remark}
\begin{document}

\subjclass[2010]{Primary 14n35; Secondary 53d45.}

\begin{abstract}
We study a family of moduli spaces and corresponding quantum invariants introduced recently by Fan--Jarvis--Ruan. The family has a wall-and-chamber structure relative to a positive rational parameter $\epsilon$. For a Fermat quasi-homogeneous polynomial W (not necessarily Calabi--Yau type), we study natural generating functions packaging the invariants. Our wall-crossing formula relates the generating functions by showing that they all lie on the Lagrangian cone associated to the Fan--Jarvis--Ruan--Witten theory of $W$.  For arbitrarily small $\epsilon$, a specialization of our generating function is a hypergeometric series called the big $I$-function which determines the entire Lagrangian cone. As a special case of our wall-crossing, we obtain a new geometric interpretation of the Landau-Ginzburg mirror theorem.
\end{abstract}

\maketitle

\tableofcontents


\section*{Introduction}\label{sec:intro}
Let $W(x_1,\dots,x_N)$ be a quasi-homogeneous polynomial with weights $(w_1,\dots,w_N)$ and degree $d$:
\begin{equation}\label{polynomial}
W(\lambda^{w_1}x_1,\dots,\lambda^{w_N}x_N)=\lambda^dW(x_1,\dots,x_N).
\end{equation}
We require that $\gcd(w_1,\dots,w_N,d)=1$ and we define the \emph{charges} $q_j:=\frac{w_j}{d}$ and set $q:=\sum q_j$. We also require $W$ to be {\em non-degenerate} in the sense that the charges $q_j$ are uniquely determined from $W$, and we require that the affine variety defined by $W$  is singular only at the origin. Let $X_W$ denote the hypersurface defined by the vanishing of $W$ in weighted projective space $\proj(w_1,\dots,w_N)$.

The Landau-Ginzburg/Calabi--Yau (LG/CY) correspondence asserts the equivalence of two cohomological field theories (CohFTs):
\begin{enumerate}
\item Fan--Jarvis--Ruan--Witten (FJRW) theory of $W$ -- defined via intersection numbers on moduli spaces of curves equipped with $W$-structures, and
\item Gromov--Witten (GW) theory of $X_W$ -- defined via intersection numbers on moduli spaces of stable maps to $X_W$.
\end{enumerate}

The LG/CY correspondence was proved by Chiodo--Iritani--Ruan in genus zero for Calabi--Yau Fermat polynomials (i.e. $W=\sum_j x_j^{d/w_j}$ where $w_j|d$ for all $j$, and $q=1$) \cite{cir:lgcy}. In their proof, they developed a mirror theorem which relates the FJRW theory of $W$ to a hypergeometric series encoding the solutions of the Picard--Fuchs equations on the mirror family near the Gepner point. The correspondence with GW theory follows by analytically continuing the solutions from the Gepner point to the large complex structure limit where the analogous mirror theorem was proved by Givental and Lian--Liu--Yau \cite{g:egwi,lly:mp}.

Rather than use mirror symmetry, Witten has suggested that for any such $W$ there exists a family of A-model CohFTs which interpolates directly between GW and FJRW theory; this is the theory of the \emph{gauged linear sigma model} (GLSM) \cite{w:glsm}. One expects that the LG/CY correspondence can be realized by studying the CohFTs across this family. Since the family has a natural wall-and-chamber structure, this leads to the idea of \emph{wall-crossing} in the A-model. One motivation for the wall-crossing approach, from the perspective of the LG/CY correspondence, is that it may give a better approach to understanding the higher genus comparison proposed by Chiodo--Ruan \cite{cr:lgcyq}.

In fact, Witten's  GLSM applies in much greater generality than hypersurfaces in weighted projective space, and Fan--Jarvis--Ruan have recently provided a mathematically rigorous definition of the general GLSM \cite{fjr:glsm}. In the case of hypersurfaces, the GLSM is a one-dimensional family of CohFTs over the nonzero rational numbers.  Over the positive rationals lies the so-called {\em geometric phase}.  This phase corresponds to a ``quasi-map'' version of stable maps with a field introduced by Chang--Li \cite{cl:gwismwf}. It is expected to be equivalent to the quasi-map theory defined by Ciocan-Fontanine--Kim--Maulik \cite{cfkm:sqtgit}.  Wall-crossing formulas in the geometric phase have been studied in a sequence of papers by Ciocan-Fontanine--Kim \cite{cfk:wc,cfk:wc2,cfk:bif}.


The purpose of this paper is to study the family of CohFTs which arise over the negative rationals corresponding to the {\em Landau-Ginzburg phase}. In particular, for a Fermat polynomial $W$ we prove wall-crossing formulas analogous to those of Ciocan-Fontanine--Kim. Moreover, we show that our wall-crossing formulas recover the LG mirror theorem by making an appropriate specialization, and we give a new enumerative interpretation of the mirror theorem in the sense that we write all of the key players in the mirror theorem in terms of weighted FJRW invariants. 

We now give a more in-depth overview of our results. We refer the reader to Section \ref{sec:main} for precise definitions. 

\subsection*{Statement of Results}

We study a family of moduli spaces $\cR^d_{\vec k,\epsilon\vec l}$ where $\epsilon\in\Q_{>0}$ (for notational convenience, we normalize the GLSM construction so that we work with a positive rational parameter).  These moduli spaces parametrize $\epsilon$-stable pairs $(C,L)$ where $C$ is a Hassett-stable rational orbifold curve with $m$ marked orbifold points $x_1,\dots,x_m$ of weight one and $n$ marked smooth points $y_1,\dots,y_n$ of weight $\epsilon$, and $L$ is an orbifold line bundle which is a $d$-th root of an appropriate twist of the canonical bundle:
\[
L^{\otimes d}\cong \omega_{C,log}\left(-\sum l_iy_i\right):=\omega_{C}\left(\sum x_i-\sum l_iy_i\right)
\]
The vector $\vec k$ records the \emph{multiplicities} of the line bundles at the orbifold marked points.

For a Fermat polynomial, we define a certain vector space $H_W'$, called the \emph{narrow state space}, and for $\phi_i\in H_W'$ we define numerical invariants 
\[
\left\langle \phi_{k_1}\psi^{j_1},\dots,\phi_{k_m}\psi^{j_m} \big | \phi_{l_1},\dots,\phi_{l_n} \right\rangle_{m,n}^{W,\epsilon}\in\Q
\]
 by capping tautological psi classes against the ``Witten class'' in $H_*\left(\cR^d_{\vec k, \epsilon\vec l},\Q\right)$. We package these invariants in generating functions using double bracket notation:
\[
\left\langle\left\langle \phi_{k_1}\psi^{j_1},\dots,\phi_{k_l}\psi^{j_l} \right\rangle\right\rangle_l^{W,\epsilon}(t,u):=\sum_{m,n}\frac{1}{m!n!}\left\langle \phi_{k_1}\psi^{j_1},\dots,\phi_{k_l}\psi^{j_l},\bt(\psi)^m \big | \bu^n \right\rangle_{l+m,n}^{W,\epsilon}
\]
where $\mathbf{t}(z):=\sum t_j^k\phi_kz^j\in H'_W[z]$, $\bu:=\sum u^k\phi_k\in H_W'$, and
\[
\mathbf{ t}(\psi)^m:=\bt(\psi_1),\dots,\bt(\psi_m).
\]  
The sum is over all $m,n\geq 0$ for which the underlying moduli space exists (i.e. we exclude terms where $l+m+n\epsilon\leq 2$). We remark that, in contrast to the double bracket notation typically used in GW theory, our notation encodes arbitrary \emph{descendant} insertions.

The \emph{genus zero descendant potential} is defined by
\begin{equation}\label{potentialfunction}
\cF_W^\epsilon(t,u):=\left\langle\left\langle \hspace{.2cm} \right\rangle\right\rangle_{0}^{W,\epsilon}=\sum_{m,n}\frac{1}{m!n!}\left\langle\bt(\psi)^m \big | \bu^n \right\rangle_{m,n}^{W,\epsilon}.
\end{equation}
For $\epsilon>1$ (denoted $\epsilon=\infty$), the invariants vanish for $n>0$ and we recover the narrow FJRW descendant potential $\cF^\infty(t)$.

For $\epsilon>0$ we package the $t$ derivatives of $\cF^\epsilon$ in the {\em large $\cJ^\epsilon$-function}:
\begin{align}\label{eqn:jfxn}
\nonumber \cJ^\epsilon(t,u,z)&:=z\phi_0\sum_{\substack{a_i\geq 0, i\in\nar \\   \sum a_i \leq\left\lceil \frac{1}{\epsilon} \right\rceil  }}\prod_i\frac{1}{a_i!}\left(\frac{u^i\phi_i}{z}\right)^{a_i}\prod_{j=1}^N\prod_{\substack{0\leq b< \sum_ia_i\langle iq_j\rangle\\ \langle b \rangle =\langle\sum_i a_i iq_j\rangle}}(b+q_j)z\\
& +\mathbf{t}(-z)+\sum_{k} \phi^k\left\langle\left\langle\frac{\phi_k}{z-\psi_{1}} \right\rangle\right\rangle^{\epsilon}_1
\end{align}
where $\langle * \rangle$ denotes the fractional part of $*$. The initial sum in \eqref{eqn:jfxn} corresponds to the \emph{unstable terms} where the underlying moduli spaces in the double bracket do not exist. In Section \ref{sec:equivariant} we give an equivariant interpretation of the terms in the $\cJ$-functions; in particular, we show how both the stable and the unstable terms arise naturally in localization computations on graph spaces. 

Following Givental, we encode the FJRW theory of $W$ in the graph of the differential $\cL:=d\cF^\infty$.  The tautological equations satisfied by $\cF^\infty(t)$ imply that $\cL$ has very special geometric properties.  By definition, $\cJ^\infty(t,-z)$ lies on the cone $\cL$ and our main result extends this property to all $\epsilon$.

\begin{theorem1}[Theorem \ref{thm}]\label{thm1}
Assume $W$ is Fermat. For all $\epsilon>0$, $\cJ^\epsilon(t,u,-z)$ is an $\cH[[u]]$-valued point of $\cL$.
\end{theorem1}

Theorem \ref{thm1} has several important consequences. The first consequence is a direct comparison of large $\cJ$-functions.

\begin{theorem1}[Theorem \ref{cor1}]\label{thm2}
For any $\epsilon_1,\epsilon_2>0$,
\[
\frac{\cJ^{\epsilon_1}(\tau^{\epsilon_1}(t,u),u,z)}{J_0^{\epsilon_1}(u)} = \frac{\cJ^{\epsilon_2}(\tau^{\epsilon_2}(t,u),u,z)}{J_0^{\epsilon_2}(u)}
\]
where $J_0^\epsilon(u)=1+O(u)$ is a scalar function and $\tau^\epsilon(t,u)$ is an invertible change of variables. In particular, since $\cJ$ encodes the $t$ derivatives of $\cF$, we conclude that any genus zero descendant invariant for $\epsilon_1$ which is not constant in $t$ can be recovered from descendant invariants for $\epsilon_2$.
\end{theorem1}

The chamber $\epsilon=\infty$ is important as it recovers FJRW theory. Another important chamber occurs in the limit $\epsilon\rightarrow 0$ (denoted $\epsilon=0$). Define the \emph{big $I$-function} to be the formal series obtained by taking $\epsilon\rightarrow 0$ and $t=0$ in \eqref{eqn:jfxn}:
\[
\bI(u,z)=z\phi_0\sum_{a_i\geq 0}\prod_i\frac{1}{a_i!}\left(\frac{u^i\phi_i}{z}\right)^{a_i}\prod_{j=1}^N\prod_{\substack{0\leq b<\sum_ia_i\langle iq_j\rangle\\ \langle b \rangle =\langle\sum_i a_i iq_j\rangle}}(b+q_j)z.
\]
The next result is obtained from Theorem \ref{thm1} and the special geometry of $\cL$.

\begin{theorem1}[Theorem \ref{cor2}]\label{thm3}
 The big $I$-function $\bI(u,-z)$ is an $\cH[[u]]$-valued point of $\cL$. Moreover, $\bI(u,-z)$ determines $\cL$.
\end{theorem1}

Theorem \ref{thm3} can be viewed as a mirror theorem. In the CY case ($q=1$), Theorem \ref{thm3} was previously proved via the twisted theory formalism by Chiodo--Iritani--Ruan \cite{cir:lgcy}. For $q\neq 1$, it was proved independently by Acosta \cite{a:thesis}.

If we specialize the big $I$-function by restricting to the degree $\leq 1$ part $\underline{\bu}\in H'_{\deg(\phi_i)\leq 1}$, then we obtain the \emph{small $I$-function}:
\[
I(\underline{u},z):=\bI(u,z)|_{u^k=0\text{ if }\deg(\phi_k)>1}.
\] 
As a special case of Theorem \ref{thm1}, we recover the usual LG mirror theorem which relates the small $I$-function to the FJRW $J$-function $J(t_0,z)$ defined by restricting $\cJ^\infty$ to primary variables $t_0=(t_0^k)$.

\begin{corollary1}[Corollary \ref{cor3}]\label{thm4}
\[
I(\underline{u},z)=I_0(\underline{u})z\phi_0+I_1(\underline{u})+O\left(\frac{1}{z}\right)
\]
and 
\[
\frac{I(\underline{u},z)}{I_0(\underline{u})}=J(\eta(\underline{u}),z).
\]
where  $\bm\eta(\underline{u})=\frac{I_1(\underline{u})}{I_0(\underline{u})}$ is the mirror map.
\end{corollary1}

A new aspect of our proof of the mirror theorem is that we obtain an enumerative interpretation of the small $I$-function in terms of invariants at $\epsilon=0$. 

\begin{theorem1}[Theorem \ref{cor4}]\label{thm5}
\[
I(\underline{u},z)=zI_0(\underline{u})\phi_0+z\sum_k \phi^k\left\langle\left\langle I_0(\underline{u})\phi_0,\frac{\phi_k}{z-\psi_2}\right\rangle\right\rangle^0_2(0,\underline u)
\]
Namely, after a correction of $\phi_0$ by $I_0(\underline{u})$ (to account for the failure of the string equation), the small $I$-function is equal to the small $J$-function (at $\epsilon=0$) without any change of variables.
\end{theorem1}

The analog of Theorem \ref{thm5} in the geometric phase was first proved by Cooper--Zinger \cite{cz:msfsqi} for hypersurfaces. Later, Ciocan-Fontanine--Kim \cite{cfk:wc} gave an independent proof of Theorem \ref{thm5} in the geometric phase which applies to a much more general class of targets.

\subsection*{Plan of the Paper}

In Section \ref{sec:main} we introduce weighted FJRW theory in the narrow, Fermat case, generalizing the usual definitions of FJRW theory.  We define the moduli spaces, invariants, and generating functions which appear in Theorem \ref{thm1}.  Our proof of the wall-crossing formula is a two-step process contained in Section \ref{sec:proofs}. In Section \ref{sec:equivariant} we use torus actions on graph spaces to produce relations among the coefficients of the large $\cJ$-functions.  In Section \ref{sec:proof} we give an explicit characterization of points on $\cL_W$ which reduces to the relations developed in Section \ref{sec:equivariant}. In Section \ref{sec:mirror} we deduce Theorems \ref{thm2}, \ref{thm3}, and \ref{thm5} from Theorem \ref{thm1}.

\subsection*{Acknowledgements}

 The first author is grateful to Pedro Acosta, Emily Clader, Nathan Priddis, and Mark Shoemaker for enlightening conversations related to this project. The second author would like to thank Huijun Fan and Tyler Jarvis for their collaboration on gauged linear sigma models on which the current work is based.  He would also like to thank Ionu\c{t} Ciocan-Fontanine and Bumsig Kim for explaining to him their wall-crossing formulas in the geometric phase from which the current work draws a great deal of inspiration. Some of the techniques in this work are motivated by recent work of Coates--Corti--Iritani--Tseng, both authors would like to thank Hsian-Hua Tseng for explaining this work to them.
   The first author has been supported by NSF RTG grants DMS-0943832 and DMS-1045119 and the NSF postdoctoral
research fellowship DMS-1401873. The second author is partially supported by NSF grant DMS-1103368 and NSF FRG grant DMS-1159265.

\section{Weighted Landau--Ginzburg Theory and Wall-Crossing}\label{sec:main}

\subsection{Moduli Spaces}\label{sec:modspaces}

The mathematical theory of GLSMs introduced recently by Fan-Jarvis-Ruan applies to a general situation of GIT-quotients.
The construction greatly simplifies for Fermat hypersurfaces in weighted projective space. For the reader's convenience, we present a self-contained description of the construction in this setting.

\begin{definition}
For $\epsilon\in\Q_{>0}$, a \emph{$(d,\epsilon)$-stable rational curve} is a rational, connected, at worst nodal orbifold curve $C$ along with $m$ distinct orbifold marked points $x_1,\dots,x_m$ and $n$ (not necessarily distinct) smooth marked points $y_1,\dots,y_n$ satisfying
\begin{enumerate}[i.]
\item all nodes and orbifold marks $x_i$ have cyclic isotropy $\mu_d$ and the orbifold structure is trivial away from these points;
\item $\mult_z\left(\epsilon\sum[y_i]\right)\leq 1$ at all points $z\in C$;
\item $\omega_{log}(\epsilon \sum[y_i]):=\omega_C(\sum[x_i]+\epsilon \sum[y_i])$ is ample where $\omega_C$ is the orbifold dualizing sheaf and $[x_i]$ is the orbifold point $\B\mu_d$.
\end{enumerate}
We denote the moduli space of $(d,\epsilon)$-stable curves by $\M^d_{0,m+\epsilon n}$. For $\epsilon>1$, the moduli space $\M^d_{0,m+\epsilon n}$ is empty unless $n=0$.
\end{definition}

By forgetting the orbifold structure, there is a morphism \[\M^d_{0,m+\epsilon n}\rightarrow \M_{0,m+\epsilon n}\] where the latter is a moduli space of Hassett-stable curves \cite{h:mswpsc}.

\begin{definition}\label{def:dspin}
For $\vec l:=(l_1,\dots,l_n)$ with $0\leq l_i\leq d-1$, an \emph{$\vec l$-twisted $d$-spin structure} on a $(d,\epsilon)$-stable curve consists of an orbifold line bundle $L$ and an isomorphism
\[
L^{\otimes d}\stackrel{\kappa}{\longrightarrow}\omega_{log}(-\sum l_i[y_i]).
\]
We denote the moduli space of  $(d,\epsilon)$-stable curves with $\vec l$-twisted $d$-spin structures by $\cR^{d}_{m,\epsilon \vec l}$; it is a smooth Deligne--Mumford stack.
\end{definition}

We recover the usual moduli spaces of $d$-spin curves by specializing $n=0$. The restriction of $L$ to an orbifold point $x$ is nothing more than a character of $\mu_d$, i.e. multiplication by $e^{2\pi i k/d}$ for some $0\leq k<d$. The integer $k$ is called the \emph{multiplicity} of $L$ at $x$ and is denoted $\mult_xL$. The spaces $\cR^{d}_{m,\epsilon \vec l}$ consist of a disjoint union of components indexed by the multiplicities of the line bundle at the orbifold points $x_i$. 

\begin{definition}
For $\vec k=(k_1,\dots,k_m)$ with $0\leq k_i\leq d-1$, let $\cR^d_{\vec k,\epsilon \vec l}$ denote the component in $\cR^d_{m,\epsilon\vec l}$ where $\mult_{x_i}L=k_i+1$ (mod $d$).
\end{definition}


The isomorphism $\kappa$ imposes a non-emptiness condition on the spaces $\cR^d_{\vec k,\epsilon \vec l}$ by comparing degrees of line bundles.  Explicitly, we compute
\begin{equation}\label{eqn:dim1}
\dim\left(\cR^d_{\vec k,\epsilon \vec l}\right)=\begin{cases}
m+n-3 & 2+\sum k_i+\sum l_i\in d\Z\\
-1 & \text{else}
\end{cases}
\end{equation}

Graph spaces $\cR^{G,d}_{\vec{k},\epsilon\vec{l}}$ are defined by enriching the objects in $\cR^d_{\vec k,\epsilon \vec l}$ with a degree one map $f:C\rightarrow\proj^1$. The map to $\proj^1$ has the effect of parametrizing an irreducible component $\hat C\subset C$ and we only require that $\omega_{log}(\epsilon \sum[y_i])$ is ample on $\overline{ C\setminus \hat C}$. The non-emptiness condition on graph spaces is the same as \eqref{eqn:dim1}. When a graph space is nonempty, its dimension is three greater than the dimension of the corresponding non-graph space.

By forgetting the line bundle $L$ and the orbifold structure on $C$, the weighted spin (graph) spaces admit maps to Hassett weighted (graph) spaces:
\[
\theta:\cR^d_{\vec k,\epsilon \vec l}\rightarrow\M_{0,m+\epsilon n} \hspace{.5cm} \text{ and } \hspace{.5cm} \theta:\cR^{G,d}_{\vec{k},\epsilon\vec{l}}\rightarrow\M_{0,m+\epsilon n}(\proj^1,1).
\]  
We define $\psi$-classes on $\cR^{(G,)d}_{\vec k,\epsilon \vec l}$ by $\psi_i:=c_1(\theta^*\mathbb{L}_i)$ where $\mathbb{L}_i$ ($1\leq i \leq m$) is the cotangent line bundle at the $i$-th marked point.

\subsection{The Witten Class and Invariants}\label{sec:virtualclasses}

In this section we define the Witten class and weighted FJRW invariants. We start with a discussion of state spaces.

\subsubsection{State Space}

In the LG/CY correspondence, a certain \emph{state space} related to the FJRW theory of $W$ is identified with the cohomology vector space $H^*(X_W)$. In other words, the FJRW state space indexes the FJRW invariants in the same way that $H^*(X_W)$ indexes the GW invariants. There is a decomposition of the FJRW state space into \emph{narrow} and \emph{broad} sectors that corresponds to the decomposition of $H^*(X_W)$ into ambient and primitive parts.

In the current situation, we define the \emph{extended narrow state space} associated to $W$ to be $H_W:=\Q^{d}$ with basis vectors denoted by $\phi_k$ with $0\leq k \leq d-1$. In the definition of the unstable terms of $\cJ$, we use a multiplication on $H_W$ defined by
\[
\phi_i\cdot\phi_j:=\phi_{i+j \text{ mod } d}
\] 
The reader should not confuse this product with the quantum ring structure of FJRW theory.

The \emph{narrow sector} $H_W'=\bigoplus_{k\in \nar}\Q_{\phi_k}$ is obtained by restricting to the vectors indexed by the set
\[
\nar:=\{k: \text{for all } j, \langle q_j (k+1) \rangle\neq 0\}.
\]
We refer to the complement of the narrow sector in the extended narrow state space as the \emph{broad sector} (the astute reader will notice that our `broad sector' is only a small part of the broad sector defined in \cite{fjr:we}). There is a perfect pairing on the narrow sector defined by
\[
(\phi_i,\phi_j)_W:=\delta_{i+j,d-2}.
\]  
We set $\phi^i:=\phi_{d-2-i}$.

Narrow states have a geometric interpretation in terms of $\cR_{\vec k, \epsilon\vec l}^d$. Specifically, the condition $k_i\in\nar$ is equivalent to the condition that $\mult_{x_i}L^{w_j}\neq 0$ for all $j$. This observation leads to the next definition.

\begin{definition}
We say an orbifold node $z$ is \emph{narrow} if $\mult_zL^{w_j}\neq 0$ for all $j$ and we call it \emph{broad} otherwise.
\end{definition}

\subsubsection{Witten Class}

The foundations of FJRW theory rest upon the construction of the \emph{Witten class}, a homology class which plays a role analogous to that of the virtual fundamental class in GW theory. The general construction of the Witten class was carried out by Fan--Jarvis--Ruan \cite{fjr:we} and equivalent contructions in the narrow sector have been developed by Polishchuk--Vaintrob \cite{pv:wc}, Chiodo \cite{c:wc}, and Chang--Li--Li \cite{cll:wc}. We only require a very special case of their constructions.

From this point on, assume $W$ is Fermat so that $w_j|d$ for all $j$. With this assumption, the genus zero theory simplifies significantly on the narrow sector due to the fact that one can write the Witten class as the Poincar\'e dual of the top Chern class of an appropriate vector bundle. In the usual FJRW setting (i.e. when $n=0$), the vector bundle is given by
\[
W_{\vec k}:=\bigoplus_j\left(R^1\pi_* L^{w_j}\right)^\vee
\]
where $\pi:C\rightarrow \cR^d_{\vec k}$ is the universal curve (cf. \cite{fjr:we}, Theorem 4.1.8(5)(a)). Notice that the rank of $W_{\vec k}$ can be computed by orbifold Riemann-Roch and is equal to
\[
-N+2q+\sum_{i}\deg\left(\phi_{k_i}\right).
\]
where
\[
\deg(\phi_k):=\sum_j\left\langle q_jk \right\rangle.
\]

In order to define the Witten class more generally for weighted FJRW invariants, we begin by defining integers $s_{i,j}$ and $l_{i,j}$ by
\begin{equation*}\label{eqn:conv}
l_i=:s_{i,j}\frac{d}{w_j}+l_{i,j} \text{ for some } 0\leq l_{i,j}<\frac{d}{w_j}.
\end{equation*}
In weighted FJRW theory, the analog of $L^{w_j}$ is played by
\begin{equation}\label{eqn:conv}
L_j:=L^{w_j}\otimes\so\left(\sum_i s_{i,j}[y_i]\right).
\end{equation}
We have the following vanishing result on cohomology.

\begin{lemma}[Concavity]\label{concavity}
For $\vec k=(k_1,\dots,k_m)$ with $k_i\in\nar$ for all $i$,
\begin{equation}
H^0\left(C,L_j\right)=0
\end{equation}
for all points of $\cR^d_{\vec k,\epsilon \vec l}$.
\end{lemma}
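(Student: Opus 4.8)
The plan is to prove the concavity lemma by a standard argument: reduce to showing $H^0(C, L_j) = 0$ on each fiber, decompose $C$ into irreducible components, and use a positivity/degree estimate together with the narrowness hypothesis to kill global sections inductively. Because $H^0$ is upper semicontinuous in flat families and the claim is a vanishing statement, it suffices to check it fiberwise over geometric points of $\cR^d_{\vec k,\epsilon \vec l}$; so fix a $(d,\epsilon)$-stable curve $C$ with the $\vec l$-twisted $d$-spin structure $L$, and set $L_j := L^{w_j}\otimes\so(\sum_i s_{i,j}[y_i])$ as in \eqref{eqn:conv}.

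\begin{proof}[Proof sketch]
Since $H^0$ is upper semicontinuous in flat families, it is enough to verify the vanishing on each geometric fiber. Fix such a curve $C$ with $\vec l$-twisted $d$-spin structure $(L,\kappa)$, and let $L_j$ be as in \eqref{eqn:conv}. We argue by induction on the number of irreducible components of $C$.

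If $C$ is irreducible, write $C = \proj^1$ as an orbifold curve with $\mu_d$-isotropy at the $x_i$ and at any remaining orbifold structure. Using $L^{\otimes d}\cong\omega_{log}(-\sum l_i[y_i])$ and the definition $\mult_{x_i}L = k_i+1$, one computes the degree of $L_j$ on the coarse curve: raising the isomorphism $\kappa$ to the $w_j$-th power and twisting by $\so(\sum_i s_{i,j}[y_i])$ gives
\[
\deg\big(|L_j|\big) = w_j\cdot\frac{2q_j^{-1}\,(\text{numerical data})}{d} \;-\;\sum_i\Big(\Big\langle\tfrac{w_j(k_i+1)}{d}\Big\rangle\Big) \;-\;\text{(contributions of the }y_i\text{)},
\]
and the key point is that the rounding-down caused by passing from the orbifold bundle $L_j$ to its coarse space is governed exactly by the fractional parts $\langle q_j(k_i+1)\rangle$, which are nonzero precisely because $k_i\in\nar$. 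Carrying out this computation shows $\deg(|L_j|) < 0$, hence $H^0(\proj^1, |L_j|) = 0$; and since $H^0$ of an orbifold line bundle is $H^0$ of the coarse descent of the bundle that extends across the stacky points, this gives $H^0(C, L_j)=0$.

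For the inductive step, pick a leaf component $C_0$ of the dual graph of $C$, meeting the rest of the curve $C' = \overline{C\setminus C_0}$ at a single node $z$. Restricting the normalization sequence
\[
0 \longrightarrow L_j|_{C'}(-z) \longrightarrow L_j \longrightarrow L_j|_{C_0} \longrightarrow 0
\]
(appropriately interpreted on the orbifold), it suffices to show $H^0(C_0, L_j|_{C_0}) = 0$ and $H^0(C', L_j|_{C'}(-z)) = 0$. The first vanishing follows from the irreducible case after checking that the degree bound persists on $C_0$ — here one uses that $C_0$ carries either an orbifold mark $x_i$ with $k_i\in\nar$, or a node $z$ which, if it is narrow, contributes a strictly fractional twist, and if it is broad, one must handle the two sides of the node together via the gluing of $\mu_d$-characters (the restriction of $L$ to a node has opposite multiplicities on the two branches). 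The second vanishing is the inductive hypothesis applied to $C'$, with the $(-z)$ only improving the degree estimate. The one subtlety is a component with no orbifold marks and only broad nodes, but stability of $C$ forces such a component to be either rational with $\geq 3$ special points contributing enough negative twisting from $\omega_C$, or to meet a marked point $y_i$, and in all cases the degree of $|L_j|$ stays negative.
\end{proof}

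\noindent\textbf{Main obstacle.} The delicate part is bookkeeping at the nodes: the multiplicities of $L$ on the two branches of an orbifold node are forced to be ``complementary'' (they sum to $0 \bmod d$ up to the $\omega_{log}$ twist), so a node that is broad on one side contributes an \emph{integer} twist rather than a fractional one, and the naive per-component degree estimate can fail. Resolving this requires either processing broad nodes in pairs (treating the two branches simultaneously so the integer contributions cancel correctly against $\omega_C$) or invoking the stability condition \emph{iii} to guarantee enough negativity from $\omega_C(\sum[x_i] + \epsilon\sum[y_i])$ on the problematic components. I expect this case analysis — rather than the irreducible base case — to be where the real work lies.
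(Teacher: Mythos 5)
There is a genuine gap, and it sits exactly where you predicted it would: the broad nodes. Your induction does not close as stated. When you peel off a leaf $C_0$ and pass to $C'=\overline{C\setminus C_0}$ with the bundle $L_j|_{C'}(-z)$, the former node $z$ becomes a distinguished point of $C'$ whose multiplicity for $L$ is in general \emph{not} narrow, so the inductive hypothesis (the lemma itself, which requires all $k_i\in\nar$) is not available; and your fallback claim that ``in all cases the degree of $|L_j|$ stays negative'' is false component-by-component: a component with no marks and two broad nodes has $\deg(L_j|_Z)=0$, and its coarse restriction can be trivial, hence carries sections. Your two suggested repairs (processing broad nodes in pairs, or extracting extra negativity from stability) are left as speculation, so the argument is incomplete precisely at the step you yourself flag as ``where the real work lies.''

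The paper's proof avoids this node bookkeeping entirely by a powering trick: a nonzero section of $L_j$ gives, via $s\mapsto s^{d/w_j}$, a nonzero section of $|L_j|^{\otimes d/w_j}$ on the coarse curve, so it suffices to kill $H^0\bigl(|C|,|L_j|^{\otimes d/w_j}\bigr)$. For that bundle one has on \emph{every} component $Z$ the estimate
\[
\deg\left(|L_j(Z)|^{\otimes \frac{d}{w_j}}\right)\;\leq\;-2+|m_Z|-\sum_{i\in m_Z}\frac{d}{w_j}\left\langle \frac{w_j(k_i+1)}{d} \right\rangle+\#\{Z\cap\overline{C\setminus Z}\}\;<\;\#\{Z\cap\overline{C\setminus Z}\}-1,
\]
where the strict drop uses only narrowness of the $k_i$ and $w_j\mid d$ (each marked point contributes at least $1$), and where extra age losses at nodes only help. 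This inequality is stable under the leaf-peeling induction (twisting down by the attaching node lowers the degree and the node count by one each), so the standard argument on the tree of coarse components gives the vanishing with no case analysis on node multiplicities at all. In short: your base-case idea (coarse degree drop from fractional parts at narrow marks) is the right ingredient, but it should be applied to the $\tfrac{d}{w_j}$-th power of the coarse bundle globally rather than to $L_j$ itself per component; as written, your inductive step has a hole that the broad-node examples above actually realize. (Compare also Remark \ref{concavityrmk}, which is the kind of strengthened statement one would need to make your induction self-contained.)
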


\begin{proof}
Let $Z\subset C$ be an irreducible component and denote the restriction of $L_j$ to $Z$ by $L_j(Z)$. By definition, 
\[
L_j(Z)^{\otimes \frac{d}{w_j}}\cong \left(\omega_{log}\left(-\sum_i l_{i,j}[y_i])\right)\right)\Bigg |_Z.
\] 
Since $\deg(\omega_{log}|Z)=-2+|m_Z|+\#\{Z\cap\overline{C\setminus Z}\}$ where $m_Z\subseteq\{1,\dots,m\}$ is the set of indices of points $x_1,\dots,x_m$ which lie on $Z$, we have
\[
\deg\left(L_j(Z)^{\otimes \frac{d}{w_j}}\right)\leq-2+|m_Z|+\#\{Z\cap\overline{C\setminus Z}\}.
\]

Let $|L_j(Z)|$ denote the push-forward of $L_j(Z)$ to the coarse curve $|Z|$. We obtain
\[
\deg\left(|L_j(Z)|^{\otimes \frac{d}{w_j}}\right)\leq-2+|m_Z|-\sum_{i\in m_Z} \frac{d}{w_j}\left\langle \frac{w_j(k_i+1)}{d} \right\rangle+\#\{Z\cap\overline{C\setminus Z}\}.
\]
Since $k_i\in\nar$ and $w_j|d$, $\frac{d}{w_j}\left\langle \frac{w_j(k_i+1)}{d} \right\rangle\geq 1$ for all $i$. Therefore,
\begin{equation}\label{eqn:sectioninequality}
\deg\left(|L_j(Z)|^{\otimes \frac{d}{w_j}}\right)<\#\{Z\cap\overline{C\setminus Z}\}-1.
\end{equation}
In particular, by induction on the number of components of $|C|$, this implies that $H^0\left(|C|,|L_j|^{\otimes \frac{d}{w_j}}\right)=0$. The lemma follows from the fact that
\[
H^0\left(C,L_j\right)=H^0\left(|C|,|L_j|\right)\rightarrow H^0\left(|C|,|L_j|^{\otimes \frac{d}{w_j}}\right)
\]
where the map $s\rightarrow s^{\frac{d}{w_j}}$ sends nonzero sections to nonzero sections.
\end{proof}

\begin{remark}\label{concavityrmk}
If one of the $k_i$ is broad, then the inequality \eqref{eqn:sectioninequality} is strict for all but (at most) one component and it still follows by induction that $H^0\left(|C|,|L_j|^{\otimes \frac{d}{w_j}}\right)=0$. Therefore, Lemma \ref{concavity} holds even if all but one of the $k_i$ are narrow.
\end{remark}

Due to concavity, $R^1\pi_*L_j$ is a vector bundle when $k_i\in\nar$.  We define a corresponding vector bundle
\[
W_{\vec k, \epsilon\vec l}:=\bigoplus  \left( R^1\pi_*L_j\right)^\vee
\]
and we define the weighted Witten class in the narrow sector by
\begin{equation}\label{virclass}
\left[\cW_{\vec k,\epsilon\vec l}\right]:=e\left(W_{\vec k, \epsilon\vec l}\right)^{\text{PD}}\in H_*\left(\cR^d_{\vec k,\epsilon \vec l},\Q\right)
\end{equation}
where $e(-)$ is the Euler class and $*^\text{PD}$ denotes the Poincar\'e dual of $*$. The (complex) homological degree of $\left[W_{\vec k,\epsilon\vec l}\right]$ can be computed by orbifold Riemann-Roch:
\begin{equation}\label{eqn:dim}
\deg_{\C}\left(\left[\cW_{\vec k,\epsilon\vec l}\right] \right)=N-3-2q+m-\sum_{i}\deg(\phi_{k_i})+n-\sum_{i}\deg(\phi_{l_i}).
\end{equation}

\begin{remark}
The symmetry between $\vec k$ and $\vec l$ in \eqref{eqn:dim} can be viewed as motivation for defining $L_j$ as in \eqref{eqn:conv}.
\end{remark}

We have the following useful vanishing property.

\begin{lemma}[Ramond Vanishing]\label{ramond}
If $\cD$ is a boundary divisor in $\cR^d_{\vec k,\epsilon \vec l}$ with a broad node, 
\[
\left[\cW_{\vec k,\epsilon\vec l}\right]\cap \cD=0.
\]
\end{lemma}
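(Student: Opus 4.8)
The plan is to show that the Witten class $[\cW_{\vec k,\epsilon\vec l}]$ restricts to zero on any boundary divisor $\cD$ carrying a broad node by exhibiting a trivial line-bundle summand inside (a piece of) the obstruction bundle $W_{\vec k,\epsilon\vec l}$ restricted to $\cD$. The point is that a boundary divisor $\cD$ parametrizes nodal curves $C = C_1 \cup_z C_2$ with the node $z$ broad, meaning $\mult_z L^{w_j} = 0$ for some $j$. For that particular $j$, the restriction of $L_j$ to the node contributes a one-dimensional piece of $R^1\pi_* L_j$ that comes from the failure of sections to glue across the node. Concretely, I would use the normalization sequence for the universal curve over $\cD$,
\[
0 \to L_j \to \nu_* \nu^* L_j \to (L_j)|_{z} \to 0,
\]
where $\nu$ is the normalization along the locus of broad nodes. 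Pushing forward, the existence of a broad node with $\mult_z L^{w_j}=0$ means $(L_j)|_z$ is the structure sheaf of the nodal section (the representation at the node is trivial for this $w_j$, so after passing to coarse curves the node survives and $(L_j)|_z \cong \so_z$), and the resulting piece of $R^1\pi_* L_j$ over $\cD$ contains a trivial summand $\so_{\cD}$.

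The key steps, in order: First, fix the broad node, i.e. choose $j$ with $\mult_z L^{w_j}=0$; by definition of $L_j$ in \eqref{eqn:conv} and since the $y_i$ are smooth points away from the node, $\mult_z L_j = \mult_z L^{w_j} = 0$ as well, so $L_j$ descends to a genuine line bundle near the node on the coarse curve and the node is not resolved there. Second, write $\pi_{\cD} : \mathcal{C}_{\cD} \to \cD$ for the universal curve restricted to $\cD$ and let $\nu : \widetilde{\mathcal{C}_{\cD}} \to \mathcal{C}_{\cD}$ be the partial normalization along the section of broad nodes; the exact sequence above pushes forward to a long exact sequence
\[
\pi_* \nu_* \nu^* L_j \to \pi_*\big((L_j)|_z\big) \to R^1\pi_* L_j \to R^1\pi_*\nu_*\nu^* L_j \to 0.
\]
Concavity (Lemma \ref{concavity}, valid here since at most one marked point is broad while nodes may be broad — actually I should invoke Remark \ref{concavityrmk}, and more carefully argue concavity holds component-wise on the normalization) gives $\pi_* \nu_* \nu^* L_j = 0$, so $\pi_*\big((L_j)|_z\big) \cong \so_{\cD}$ injects into $R^1 \pi_* L_j$. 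Third, dualize: this produces a surjection $W_{\vec k,\epsilon\vec l}|_{\cD} = \bigoplus_j (R^1\pi_* L_j)^\vee|_{\cD} \twoheadrightarrow \so_{\cD}$, i.e. a trivial quotient line bundle, equivalently a nowhere-vanishing section of the dual. Fourth, conclude that $e(W_{\vec k,\epsilon\vec l})|_{\cD}$ factors through $e$ of a bundle with a trivial quotient, hence $e(W_{\vec k,\epsilon\vec l})|_{\cD} = 0$ in $H^*(\cD)$; pushing forward to $H_*(\cR^d_{\vec k,\epsilon\vec l})$ via the Gysin map $\cD \hookrightarrow \cR^d_{\vec k,\epsilon\vec l}$ gives $[\cW_{\vec k,\epsilon\vec l}] \cap \cD = 0$ by the projection formula.

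I expect the main obstacle to be the careful bookkeeping in Step 2–3: verifying that the piece of $R^1\pi_* L_j$ coming from the broad node really does split off as a \emph{trivial} (not merely rank-one) summand over all of $\cD$, uniformly in the moduli. This requires that the normalization sequence behaves well in families and that the map $\pi_*\nu_*\nu^* L_j \to \pi_*((L_j)|_z)$ vanishes identically over $\cD$ — which follows from concavity applied to each component of the normalized curve, but one must be attentive that broad \emph{nodes} (as opposed to broad marked points) are allowed and that Remark \ref{concavityrmk} genuinely covers this configuration. Once the trivial summand is in hand, the vanishing of the Euler class is formal: a bundle admitting a trivial quotient (equivalently $\so_{\cD}$ as a sub of the dual) has a nowhere-zero section of its dual, so its top Chern class vanishes. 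A secondary subtlety is the orbifold-vs-coarse comparison at the node — one should phrase the whole argument after pushing to coarse curves, as in the proof of Lemma \ref{concavity}, to avoid sign/representation issues with the orbifold structure at the node.
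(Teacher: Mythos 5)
Your overall strategy coincides with the paper's: normalize along the broad node, use concavity (via Remark \ref{concavityrmk}, applied componentwise to the normalization, where each branch acquires at most one broad special point) to kill the $H^0$-terms, so that the rank-one piece $\pi_*\bigl(L_j|_z\bigr)$ injects into $R^1\pi_*L_j|_{\cD}$, and then kill the Euler class by the Whitney formula. The gap is exactly at the step you flag as ``the main obstacle'' and then do not actually resolve: the assertion that $\pi_*\bigl(L_j|_z\bigr)\cong\so_{\cD}$. What you establish is only fiberwise: on each curve the $\mu_d$-representation on the fiber of $L_j$ at the broad node is trivial, so that fiber is an abstract one-dimensional vector space. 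This is true of the fibers of \emph{any} line bundle on $\cD$ and says nothing about global triviality over the moduli space; and your proposed justification (vanishing of the map $\pi_*\nu_*\nu^*L_j\to\pi_*\bigl(L_j|_z\bigr)$, which follows from concavity) only yields that $\pi_*\bigl(L_j|_z\bigr)$ is a sub-line-bundle of $R^1\pi_*L_j|_{\cD}$, not that this sub is trivial. So as written the key input to your Step 4 is unproved.

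The paper closes precisely this gap, and with a weaker statement that is all the Euler-class argument needs: from $L_j^{\otimes d/w_j}\cong\omega_{log}\bigl(-\sum_i l_{i,j}[y_i]\bigr)$ and the fact that the light points $y_i$ stay away from the node, one gets $\pi_*\bigl(L_j|_z^{\otimes d/w_j}\bigr)\cong\pi_*\bigl(\omega_C|_z\bigr)$, and the restriction of the dualizing sheaf to the node is canonically trivial (residue at the node); hence $\pi_*\bigl(L_j|_z\bigr)$ is a torsion line bundle and $c_1\bigl(\pi_*L_j|_z\bigr)=0$ in $H^2(\cD,\Q)$. Vanishing of $c_1$ of the sub-line-bundle already forces $e\bigl(R^1\pi_*L_j\bigr)|_{\cD}=0$ by Whitney, and the Witten class vanishes on $\cD$ as you say. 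So your argument becomes correct once you replace the unjustified identification with $\so_{\cD}$ by this $d/w_j$-th power computation (or any other proof that the rational first Chern class of the nodal line vanishes); the remaining steps --- the choice of $j$ with $\mult_zL^{w_j}=0$, the componentwise concavity, and the final projection-formula step --- match the paper's proof.
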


\begin{proof}
Suppose that the broad node $z$ has multiplicity $k$. Since it is broad, $\frac{w_jk}{d}$ is an integer for some $w_j$. Letting $C_1$ and $C_2$ denote the components of $C$ separated by $z$, we have a short exact sequence on $\cD$:
\[
0\rightarrow \pi_*L_j|_z\rightarrow R^1\pi_*L_j \rightarrow R^1\pi_*L_j|_{C_1}\oplus R^1\pi_*L_j|_{C_2}\rightarrow 0
\]
where the vanishing of the initial terms follows from concavity and Remark \ref{concavityrmk}. The nonvanishing of $\pi_*L_j|_z$ is a consequence of broadness. We compute
\[
\pi_*\left(L_j|_z^{\otimes\frac{d}{w_j}}\right)\cong \pi_*\left(\omega_{C}|_{z}\right).
\]
The latter is trivial. Therefore,
$c_1(\pi_*L_j|_z)=0$ which implies 
\[
e\left(\bigoplus_j R^1\pi_*  L_j \right)\cup \cD=0
\]
from which the result follows.
\end{proof}

\begin{remark}
Concavity fails in higher genus, even for $W$ Fermat. However, an algebraic Witten class can still be defined in the narrow sector and Ramond vanishing continues to hold in higher genus if $W$ is Fermat \cite{fjr:glsm}.
\end{remark}

Numerical invariants are defined by integrating $\psi$-classes over the Witten class:
\begin{equation}\label{fjrwinvts}
\left\langle \phi_{k_1}\psi^{j_1},\dots,\phi_{k_m}\psi^{j_m} \big| \phi_{l_1},\dots,\phi_{l_n}  \right\rangle^{W,\epsilon}_{m,n}:=d\int_{\left[\cW_{\vec k,\epsilon\vec l}\right]}\prod\psi_i^{j_i}\in\Q.
\end{equation}
For the weighted points $y_i$, we use $\phi_i$ as a convenient book-keeping device, but the reader should not confuse it with insertions at the orbifold points. The invariants are defined to vanish if any of the $k_i,l_i$ are broad or if the underlying moduli space does not exist.

As in the introduction, we define
\[
\left\langle\left\langle \phi_{k_1}\psi^{j_1},\dots,\phi_{k_l}\psi^{j_l} \right\rangle\right\rangle_l^{W,\epsilon}(t,u):=\sum_{m,n}\frac{1}{m!n!}\left\langle \phi_{k_1}\psi^{j_1},\dots,\phi_{k_l}\psi^{j_l},\bt(\psi)^m \big | \bu^n \right\rangle_{l+m,n}^{W,\epsilon}
\]
where $\mathbf{t}(z):=\sum t_j^k\phi_kz^j\in H'_W[z]$, $\bu:=\sum u^k\phi_k\in H_W'$, and
\[
\bt(\psi)^m:=\bt(\psi_1),\dots,\bt(\psi_m).
\]

The \emph{genus zero descendant potential} is
\[
\cF_W^\epsilon(t,u):=\left\langle\left\langle \hspace{.2cm} \right\rangle\right\rangle_{0}^{W,\epsilon}=\sum_{m,n}\frac{1}{m!n!}\left\langle\bt(\psi)^m \big | \bu^n \right\rangle_{m,n}^{W,\epsilon}.
\]

Graph space invariants will be important in our proof of the wall-crossing formula. Concavity and Ramond vanishing continue to hold for graph spaces $\cR^{G}_{\vec k,\epsilon\vec l}$. We define a Witten class in graph spaces as in \eqref{virclass}. It has homological degree 
\[
\deg_{\C}\left(\left[\cW^{G}_{\vec k,\epsilon\vec l}\right] \right)=N-2q+m-\sum_{i}\deg(\phi_{k_i})+n-\sum_{i}\deg(\phi_{l_i}).
\]
This allows us to define the graph space invariants 
\[
\left\langle \tau_{j_1}(\phi_{k_1}),\dots,\tau_{j_m}(\phi_{k_m})\big |\phi_{l_1},\dots,\phi_{l_n}  \right\rangle^{G,W,\epsilon}_{m,n}
\]
as in \eqref{fjrwinvts}.

\subsection{Givental's Symplectic Formalism}

Following Givental, we define the infinite dimensional vector space $\cH:=H_W'((z^{-1}))$ with symplectic form
\[
\Omega(f(z),g(z)):=\text{Res}_{z=0}(f(z),g(-z))_W. 
\]
$\cH$ has a natural polarization $\cH=\cH^+\oplus\cH^-$ where elements of $\cH^+$ are polynomial in $z$ while elements of $\cH^-$ are formal series with only negative powers of $z$. The natural Darboux coordinates for this polarization are
\[
\mathbf{q}=\sum q_j^k\phi_kz^j\in \cH^+\hspace{1cm}\text{and}\hspace{1cm}\mathbf{p}=\sum p_{k,j}\phi^k(-z)^{-j-1}\in \cH^-.
\]
The Darboux coordinates canonically identify $\cH$ with the cotangent space on $\cH^+$.
 
Via the \emph{dilaton shift}
\[
\mathbf{q}(z):=\mathbf{ t}(z)-\phi_0z,
\]
we can think of $\cF^\infty( t)$ as a formal function on $\cH^+$. We define $\cL$ to be the graph of the differential of $\cF^\infty( t)$ in $\cH$:
\[
\cL:=\{( \mathbf{q, p})\in\cH: \mathbf{p}=\partial_{ \mathbf{q}}\cF^\infty( t)\}.
\]

Givental showed that the tautological equations satisfied by $\cF^\infty$ are equivalent to the fact that $\cL$ is the formal germ of a Lagrangian cone with vertex at the origin such that each tangent space $T$ to the cone is tangent to the cone exactly along $zT$ \cite{g:sgofs}.

\subsection{Wall-Crossing Formula}

Our wall crossing formula relates the {\em large $\cJ^\epsilon$-functions} which package the $t$ derivatives of $\cF^\epsilon$:
\begin{align}\label{jxn2}
\nonumber \cJ^\epsilon(t,u,z)&:=z\phi_0\sum_{\substack{a_i\geq 0, i\in\nar \\   \sum a_i \leq\left\lceil \frac{1}{\epsilon} \right\rceil  }}\prod_i\frac{1}{a_i!}\left(\frac{u^i\phi_i}{z}\right)^{a_i}\prod_{j=1}^N\prod_{\substack{0\leq b<\sum_ia_i\langle iq_j\rangle\\ \langle b \rangle =\langle \sum_i a_i iq_j\rangle}}(b+q_j)z\\
& +\mathbf{t}(-z)+\sum_{k} \phi^k\left\langle\left\langle\frac{\phi_k}{z-\psi_{1}} \right\rangle\right\rangle^{\epsilon}_1
\end{align}
where $\langle * \rangle$ denotes the fractional part of $*$. When $\epsilon>1$, we have
\[
\cJ^\infty(t,-z)=-z\phi_0+\mathbf{t}(z)+\sum_{k}  \phi^k\left\langle\left\langle \frac{\phi_k}{-z-\psi_{m+1}} \right\rangle\right\rangle^{\infty}_{1}(t)
\]
which (by definition) spans the entire cone $\cL$. This leads to the following definition.

\begin{definition} 
By an \emph{$\cH[[u]]$-valued point of $\cL$}, we mean a formal series in $H'[[t,u]]((z^{-1}))$ which is of the form
\begin{equation}\label{eqn:genform}
-z\phi_0+\hat\bt(z)+\sum_{k} \phi^k\left\langle\left\langle\frac{\phi_k}{-z-\psi_{m+1}} \right\rangle\right\rangle^{\infty}_{1}\left(\hat t\right)
\end{equation}
for some $\hat\bt(z)=\bt(z)+O(u)\in \cH^+[[u]]$. 
\end{definition}

Our main result is the following.

\begin{theorem}\label{thm}
For all $\epsilon>0$, $\cJ^\epsilon(t,u,-z)$ is an $\cH[[u]]$-valued point of $\cL$.
\end{theorem}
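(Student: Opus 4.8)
The plan is to establish the theorem in the two stages indicated in the Plan of the Paper. In the first stage (Section~\ref{sec:equivariant}), following Givental, we exploit the standard $\C^*$-action on $\proj^1$ and the resulting action on the graph spaces $\cR^{G,d}_{\vec k,\epsilon\vec l}$ to produce a system of recursion relations $(\star)$ satisfied by the Laurent coefficients in $z$ of $\cJ^\epsilon(t,u,-z)$. In the second stage (Section~\ref{sec:proof}), we give an explicit description of $\cH[[u]]$-valued points of $\cL$ and show that, for a series of the appropriate shape, membership in $\cL$ is equivalent to the relations $(\star)$; combining the two stages proves the theorem. Two structural features keep the first stage manageable: by Lemma~\ref{concavity} the Witten class $[\cW^G_{\vec k,\epsilon\vec l}]$ is the Poincar\'e dual of an honest Euler class, so the ordinary Atiyah--Bott localization theorem applies and no virtual machinery is required; and by Lemma~\ref{ramond} every contribution coming from a fixed locus whose attaching nodes are broad vanishes.

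For the localization computation, the $\C^*$-fixed loci of $\cR^{G,d}_{\vec k,\epsilon\vec l}$ are indexed by the degeneration type of the source curve: a distinguished rational component $\hat C$ maps isomorphically onto $\proj^1$ and carries $\mu_d$-orbifold points over $0$ and over $\infty$ at which (possibly trivial) trees of $\epsilon$-stable subcurves, with their induced $d$-spin structures, are attached. On each such locus one computes the equivariant Euler class of the virtual normal bundle --- the delicate part being the node-smoothing factors at the orbifold attaching points --- together with the equivariant Euler class of $\bigoplus_j(R^1\pi_*L_j)^\vee$. The latter decomposes along the normalization sequence: its restriction to $\hat C$ is the equivariant Euler class of the $H^1$ of a line bundle on $\proj^1$, which produces exactly the hypergeometric factors $\prod_{0\le b<\cdots}(b+q_j)z$ of \eqref{jxn2}, while its restriction to the trees recovers the Witten classes of the corresponding smaller moduli spaces. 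Summing the fixed-point contributions, the locus carrying no trees contributes the unstable hypergeometric terms of $\cJ^\epsilon$, the loci carrying a tree over $\infty$ reassemble the stable double-bracket term $\sum_k\phi^k\langle\langle\phi_k/(z-\psi_1)\rangle\rangle^\epsilon_1$, and the loci carrying trees over $0$ contribute the remaining data; comparing these contributions yields the relations $(\star)$.

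For the second stage, one observes that a formal series $\mathbf{f}(z)\in H'_W[[t,u]]((z^{-1}))$ whose part polynomial in $z$ equals $-z\phi_0+\hat\bt(z)$ with $\hat\bt(z)=\bt(z)+O(u)$ lies on $\cL$ if and only if its part involving only negative powers of $z$ is the one that \eqref{eqn:genform} assigns to $\hat\bt$; unwinding the double bracket in \eqref{eqn:genform} using the special geometry of $\cL$ (the string equation and the property that each of its tangent spaces $T$ is tangent to $\cL$ precisely along $zT$) together with the explicit two- and three-point correlators of the FJRW theory --- governed by the pairing $(\phi_i,\phi_j)_W=\delta_{i+j,d-2}$ --- turns this into precisely the system $(\star)$. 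Since $\cJ^\epsilon(t,u,-z)$ visibly has polynomial-in-$z$ part of the required form $-z\phi_0+\bt(z)+O(u)$ and, by the first stage, satisfies $(\star)$, it is an $\cH[[u]]$-valued point of $\cL$. For $\epsilon>1$ this specializes to the tautological statement that $\cJ^\infty(t,-z)$ spans $\cL$.

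I expect the entire difficulty to lie in the first stage: carrying out the orbifold graph-space localization correctly in the presence of both the $\mu_d$-isotropy and the twisted $d$-spin bundle $L$, and above all computing the equivariant Euler class of $\bigoplus_j(R^1\pi_*L_j)^\vee$ on each fixed locus so that it reproduces the hypergeometric series exactly. This forces one to juggle simultaneously the fractional charges $q_j$, the multiplicities $k_i$ of $L$ at the orbifold points, the twists $s_{i,j}$, and the node-smoothing contributions; in particular, matching the tree-free fixed locus with the unstable terms of $\cJ^\epsilon$ --- including accounting for the truncation $\sum a_i\le\lceil 1/\epsilon\rceil$ via the $(d,\epsilon)$-stability constraints on the relevant subcurves --- is the crux. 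Once $(\star)$ has been isolated correctly, the characterization of points on $\cL$ in the second stage is comparatively formal.
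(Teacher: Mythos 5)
Your two-stage outline reproduces the architecture announced in the paper's plan, but as a proof it has a genuine gap at both stages, and the gap is precisely where the content lies. First, you never say what the relations $(\star)$ actually are. In the paper they are completely specific: for all narrow $r,s$ the pairing $\left( \partial_{u^r} \cJ^\epsilon(t,u,z), \partial_{t_0^s} \cJ^\epsilon(t,u,-z) \right)$ is regular at $z=0$ (Lemma \ref{lem:relation1}), and the mechanism is not ``summing fixed-point contributions and comparing'': one introduces an auxiliary equivariant series \eqref{eqn:graphseries} on graph spaces with two extra marked points carrying the insertions $ev_{m+1}^*([\infty])\cup\tilde{ev}_{n+1}^*([0])$, observes it is manifestly polynomial in $z$, and then shows by localization that its fixed loci factor over $0$ and $\infty$ into exactly the coefficients of $\partial_{u^r}\cJ^\epsilon(z)$ and $\partial_{t_0^s}\cJ^\epsilon(-z)$ (stable loci giving the double brackets, the unstable loci over $0$ giving the hypergeometric terms). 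Without identifying which graph-space integral you localize and why its regularity is the relation, stage one is a description of a computation, not a proof.

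Second, your stage two is stated incorrectly: you claim that for a series with polynomial part $-z\phi_0+\hat\bt(z)$, membership in $\cL$ is equivalent to $(\star)$, obtained by ``unwinding'' \eqref{eqn:genform} with the string equation and special geometry. The relations of type \eqref{eqn:master} cannot by themselves be equivalent to membership: they only determine the coefficients of positive $u$-degree recursively (the coefficient $f_{\vec m,(\vec n,r),j,s}$ appears linearly with nonzero coefficient $(n^r+1)$), so one must anchor the recursion by knowing separately that the $u=0$ specialization already lies on $\cL$ --- condition (iii) of Lemma \ref{thm:determine}, verified for $\cJ^\epsilon$ by the observation that at $u=0$ the weighted theory reduces to ordinary FJRW theory, so $\cJ^\epsilon(t,0,z)=\cJ^\infty(t,z)$ --- together with the shape conditions (i)--(ii). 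Moreover the forward implication (points of $\cL$ satisfy the regularity) is itself proved by another graph-space localization at $\epsilon=\infty$, not by a formal string-equation manipulation of two- and three-point correlators. So the logical skeleton ``shape $+$ $(\star)$ $\Rightarrow$ on $\cL$'' as you state it would fail; you need the uniqueness-by-recursion argument of Lemma \ref{thm:determine} and the $u=0$ anchor to close the loop.
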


\section{Proof of Wall-Crossing}\label{sec:proofs}

\subsection{$\cJ$-function Relations}\label{sec:equivariant}

In this section we derive a set of universal relations satisfied by the coefficients of the $\cJ$-functions.  In the process, we give an equivariant description of the $\cJ$-functions in terms of fixed-point contributions to certain auxiliary integrals on graph spaces.

To ease notation, we omit $W$ when confusion does not arise.

\begin{lemma}\label{lem:relation1}
For every $\epsilon>0$, the series 
\begin{equation}\label{eqn:pairing}
\left( \partial_{u^r} \cJ^\epsilon(t,u,z), \partial_{t_0^s} \cJ^\epsilon(t,u,-z) \right)
\end{equation}
is regular at $z=0$ for all narrow $r,s$.
\end{lemma}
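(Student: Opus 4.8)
The plan is to realize the pairing (\ref{eqn:pairing}) as (part of) a localization contribution to an integral over a graph space $\cR^{G,d}_{\vec{k},\epsilon\vec{l}}$, and then deduce regularity at $z=0$ from the fact that the integral itself is a polynomial in the equivariant parameter $z$ (or at least regular there). First I would set up a $\C^*$-action on the auxiliary $\proj^1$ appearing in the graph space, with the standard two fixed points $0,\infty$, inducing an action on $\cR^{G,d}_{\vec{k},\epsilon\vec{l}}$ whose fixed loci are indexed by distributions of marked points, orbifold structure, and the line bundle $L$ over the two special fibers, together with a ``main component'' parametrizing the rest of the curve contracted to $0$ or $\infty$. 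The key is to choose the integrand on the graph space — a product of $\psi$-classes at the two marked points carrying $\phi_r$ and $\phi_s$, together with the equivariant Witten class — so that applying the Atiyah–Bott localization formula and summing the fixed-point contributions reproduces exactly the series $\left(\partial_{u^r}\cJ^\epsilon(t,u,z),\,\partial_{t_0^s}\cJ^\epsilon(t,u,-z)\right)$, where the two factors come from the fixed loci concentrated over $0$ and over $\infty$ respectively, and $z$ (resp.\ $-z$) is the restriction of the equivariant parameter to the two fixed points. This is the standard Givental-style argument identifying $\cJ$-functions with graph-space localization data; the unstable terms in (\ref{eqn:jfxn}) are, as the authors note in Section~\ref{sec:equivariant}, precisely the contributions of fixed loci where the curve over $0$ or $\infty$ is itself unstable.

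Next, granting that identification, I would observe that the left-hand side of the localization formula — the integral of an honest (equivariant) cohomology class against $[\cW^G_{\vec{k},\epsilon\vec{l}}]$ over the compact (Deligne–Mumford) graph space — is a Laurent polynomial in $z$ with no pole at $z=0$: indeed after pushing to a point the equivariant integral is a polynomial in $z$, and the only potential denominators are introduced artificially by the localization formula on the individual fixed loci. Grouping the fixed-point contributions by whether the special point sits over $0$ or over $\infty$, and matching these two groups with $\partial_{u^r}\cJ^\epsilon(t,u,z)$ and $\partial_{t_0^s}\cJ^\epsilon(t,u,-z)$, the sum over the ``mixed'' recursion structure collapses exactly to the symplectic pairing $(\,\cdot\,,\,\cdot\,)_W$ via the gluing/splitting of the Witten class at a narrow node (here Ramond vanishing, Lemma~\ref{ramond}, is what guarantees that only narrow nodes contribute, so the pairing is the honest Poincaré pairing on $H'_W$ and not something involving broad sectors). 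Regularity at $z=0$ of the graph-space integral then forces regularity at $z=0$ of the pairing, which is the claim. The restriction to narrow $r,s$ is needed so that concavity (Lemma~\ref{concavity}) holds and the Witten class is the Euler class of a bundle on the graph space, legitimizing the whole construction.

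The main obstacle I anticipate is bookkeeping the fixed-point contributions precisely enough to see that the ``node-smoothing'' factor assembles into the symplectic pairing with no leftover poles — in particular, handling the fixed loci where the curve degenerates at $0$ or $\infty$ with an orbifold node of multiplicity forcing a nontrivial twist, and checking that the $1/(z-\psi)$-type denominators produced at such a node cancel against the numerator factors coming from the equivariant Euler class of the normal bundle. A secondary subtlety is the unstable/boundary terms: one must verify that the explicit hypergeometric ``unstable'' summand in $\cJ^\epsilon$ is genuinely the localization contribution of the corresponding unstable fixed locus (this is where the $\lceil 1/\epsilon\rceil$ truncation enters), rather than an ad hoc correction; since the excerpt promises this computation in Section~\ref{sec:equivariant}, I would structure the proof of Lemma~\ref{lem:relation1} to invoke that identification as a black box and then run the regularity argument above.
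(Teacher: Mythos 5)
Your overall strategy is the paper's: realize \eqref{eqn:pairing} as the $\C^*$-localization of an equivariant integral over a graph space $\cR^{G,d}_{\vec k,\epsilon\vec l}$, use properness to conclude that the equivariant pushforward is polynomial in $z$ and hence regular at $z=0$, match the stable fixed loci over $0$ and $\infty$ with the coefficients of $\partial_{u^r}\cJ^\epsilon(t,u,z)$ and $\partial_{t_0^s}\cJ^\epsilon(t,u,-z)$, and use the Ramond-vanishing mechanism at the connecting node so that only narrow multiplicities survive and the gluing produces the pairing $(\cdot,\cdot)_W$. However, two points keep the proposal from being a proof. First, the integrand is not ``a product of $\psi$-classes at the two distinguished points'': the paper adds one extra orbifold point $x_{m+1}$ carrying $\phi_s$ and one extra light point $y_{n+1}$ carrying $\phi_r$, and cups the Witten class with $ev_{m+1}^*([\infty])\cup\tilde{ev}_{n+1}^*([0])$, where $[0]$ and $[\infty]$ are the equivariant classes of suitably linearized copies of $\so(1)$. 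These insertions are exactly what discard the fixed loci with $f(x_{m+1})=0$ or $f(y_{n+1})=\infty$ and what make the two groups of remaining loci produce the two derivatives evaluated at $z$ and at $-z$; leaving this choice unspecified (or replacing it by $\psi$-classes) leaves the central identification \eqref{eqn:pairing}$=$\eqref{eqn:graphseries} unestablished.

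Second, and more seriously, your plan to invoke the identification of the unstable fixed-locus contributions with the hypergeometric unstable terms of \eqref{eqn:jfxn} ``as a black box from Section \ref{sec:equivariant}'' is circular: that identification is not available elsewhere in the paper, it \emph{is} Step 5 of the proof of this very lemma. This is the main computational content you have deferred: for the loci with $m_0=0$ and $n_0+1\leq 1/\epsilon$ light points stacked over $0$, one observes $\hat C\cong\proj(1,d)$, computes $L_j|_{\hat C}\cong\so\left(-\left(q_j+\langle q_jr\rangle+\sum_i\langle q_j(\vec l_0)_i\rangle\right)[\infty]\right)$, writes down \u{C}ech representatives of $H^1$, and reads off the equivariant Euler class of $\bigoplus_j(R^1\pi_*L_j|_{\hat C})^\vee$ as the product of factors $(b+q_j)z$, together with the factor $z^{n_0+1}$ from deforming the light points away from $0$; only then do the $\left\lceil 1/\epsilon\right\rceil$-truncated hypergeometric terms of $\cJ^\epsilon$ appear (and the other unstable case, $m_\infty=n_\infty=0$, accounts for $\partial_{t_0^s}\bt(-z)=\phi_s$). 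Until this computation is carried out, the regularity argument applies to the graph-space series but not yet to the pairing in the statement.
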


\begin{proof}

We prove the lemma via Atiyah-Bott localization on graph spaces, we proceed in several steps.

\subsubsection*{Step 1: Equivariant Setup} 

Let $\C^*$ act on $\proj^1$ via $\lambda[z_0:z_1]:=[\lambda z_0: z_1]$ so that the tangent bundle $T\proj^1$ has weights $1$ at $0=[1:0]$ and $-1$ at $\infty=[0:1]$. Define the equivariant class $[0]:=c_1(\so(1))\in H_{\C^*}^*(\proj^1)$ where $\so(1)$ is linearized with weights $1$ at $0$ and $0$ at $\infty$. Similarly define $[\infty]$ by linearizing $\so(1)$ with weights $0$ at $0$ and $-1$ at $\infty$.  

The $\C^*$ action on $\proj^1$ naturally induces an action on $\cR^{G,d}_{\vec k,\epsilon \vec l}$ and it lifts canonically to an action on the vector bundle $W_{\vec k,\epsilon\vec l}^G$. More specifically, $\C^*$ acts on $C$ by acting on (the coarse coordinates of) the parametrized component $\hat C$ and this action lifts canonically to any space of sections by pre-composing each section with the $\C^*$ action on $C$. This lift defines a canonical equivariant Witten class on graph space: it is the equivariant Euler class of $W_{\vec k,\epsilon\vec l}^G$.

There are equivariant evaluation maps 
\[
ev_i,\tilde{ev}_j:\cR^{G,d}_{\vec k,\epsilon \vec l}\rightarrow \proj^1
\]
which record the images of $x_i,y_j$, respectively, under the map $f$.

For any cohomology class $\gamma\in H^*\left(\cR^{G,d}_{\vec k,\epsilon \vec l}\right)$, we define invariants $\langle\alpha\big | \beta\big | \gamma\rangle^{G,\epsilon}_{m,n}$ by cupping the integrand in the definition of $\langle \alpha\big | \beta \rangle^{G,\epsilon}_{m,n}$ with $\gamma$. In the presence of a torus action, integration is the equivariant push-forward to a point.

\subsubsection*{Step 2: Equivariant Interpretation of \eqref{eqn:pairing}}

Consider the equivariant series
\begin{equation}\label{eqn:graphseries}
\sum_{m,n\geq 0}\frac{1}{m!n!}\left\langle\bt(-\psi)^m,\phi_s\big | \bu^n,\phi_r \big | ev_{m+1}^*([\infty])\cup\tilde{ev}_{n+1}^*([0])\right\rangle^{G,\epsilon}_{m+1,n+1}.
\end{equation}
By definition, \eqref{eqn:graphseries} is regular at $z=0$. Therefore, Lemma \ref{lem:relation1} follows from the claim that \eqref{eqn:pairing}=\eqref{eqn:graphseries}. We prove this claim in Steps 3-5.

\subsubsection*{Step 3: Localization Formula}

By the localization theorem, \eqref{eqn:graphseries} can be computed as a sum of contributions from each $\C^*$ fixed locus.  The fixed loci parametrize curves where all of the marked points and nodes are mapped to $0$ and $\infty$ via $f$. We discard the loci where $f(x_{m+1})=0$ or $f(y_{n+1})=\infty$ because the integrand vanishes when restricted to these loci. For each $\vec k, \vec l$, we obtain a fixed locus for each splitting of the remaining $m+n$ points over $0$ and $\infty$.  Denote such a fixed locus by 
\[
\iota:F_{\vec k_0,\vec l_0}^{\vec k_\infty, \vec l_\infty}\hookrightarrow \cR^{G,d}_{(\vec k,s),\epsilon(\vec l,r)}
\]
where $\vec k_0,\vec k_\infty$ is a splitting of the vector $\vec k$ into subvectors of lengths $m_0,m_\infty$, similar for $\vec l$.  By the Atiyah-Bott localization formula, the series \eqref{eqn:graphseries} is equal to
\begin{equation}\label{eqn:graphseries2}
\sum_F\frac{1}{m!n!}\int_F\frac{\iota^*\left(e\left(W^G\right)\cup\bt(-\psi)^{\vec k}\cup \bu^{\vec l}\cup ev_{m+1}^*([\infty])\cup\tilde{ev}_{n+1}^*([0]) \right)}{e(\cN_F)}
\end{equation}
where the denominator is the equivariant Euler class of the normal bundle and 
\[
\bt(-\psi)^{\vec k}=\bt(-\psi_1)_{k_1},\dots,\bt(-\psi_m)_{k_m}
\] 
with $\bt(z)_k$ the coefficient of $\phi_k$ in $\bt(z)$, similar for $\bu^{\vec l}$.

\subsubsection*{Step 4: Stable Terms}

Define a fixed locus to be stable if it has a node over both $0$ and $\infty$. For stable fixed loci, we have 
\begin{equation}\label{fixedident}
F_{\vec k_0,\vec l_0}^{\vec k_\infty, \vec l_\infty}\cong \cR^{d}_{(\vec k_0,k),\epsilon(\vec l_0,r)}\times \cR^{d}_{(\vec k_\infty,s,d-2-k),\epsilon \vec l_\infty}
\end{equation}
where $k$ is uniquely determined from $\vec k_0$, $\vec l_0$, and the non-emptyness condition \eqref{eqn:dim1}. From the normalization sequence of the curve, we have a long exact sequence in cohomology
\begin{align}\label{longexact}
0&\rightarrow \bigoplus_j \pi_*L_j|_{\hat C}\rightarrow\bigoplus_j\pi_*L_j|_0\oplus\pi_*L_j|_\infty\rightarrow\\
\nonumber&\rightarrow \iota^*\left(W^G\right)^\vee \rightarrow W_0^\vee \oplus W_\infty^\vee\oplus \bigoplus_j R^1\pi_*L_j|_{\hat C}\rightarrow 0
\end{align}
When $k\notin\nar$, the first line is nonzero. The first term is canonically identified with either one of the summands in the second term (we are using the fact that $F$ is a stable locus which implies that $\deg(L_j|_{\hat C})=0$). The remaining summand in the second term has trivial equivariant Euler class by an argument analogous to that in the proof of Lemma \ref{ramond} along with the fact that the canonical $\C^*$ action on this term is trivial. Therefore, $e\left(\iota^*\left(W^G\right)\right)=0$ when $k\notin\nar$.

When $k\in\nar$, the first line of \eqref{longexact} vanishes. Since $\deg(L_j|_{\hat C})=0$, the third summand in the final term also vanishes. Altogether, the above observations imply the following.
\begin{itemize}
\item $ \iota^*e\left(W^G\right)=\begin{cases}e(W_0)e(W_\infty) & k\in\nar\\ 0 & k\notin\nar.\end{cases}$
\end{itemize}
Since the canonical $\C^*$ action on $W_0$ and $W_\infty$ is trivial, this implies in particular that the equivariant Witten class on the graph space restricts to the product of the usual Witten classes on the stable fixed loci. 

We also compute 
\begin{itemize}
\item $\iota^*\psi =\psi$, 
\item $\iota^* ev_{m+1}^*([\infty])=-z$, and
\item $\iota^* \tilde{ev}_{n+1}^*([0])=z$.
\end{itemize}

In addition, the normal bundle in the denominator of \eqref{eqn:graphseries2} contributes factors
\begin{itemize}
\item $\frac{1}{d}(z-\psi_{m_0+1})$ and $\frac{1}{d}(-z-\psi_{m_\infty+1})$ from smoothing the nodes at $0$ and $\infty$ on the \emph{orbifold} curve, and
\item $-z^2$ from deforming the map to $\proj^1$
\end{itemize}

Pulling everything together, we compute that for stable fixed loci the integral in \eqref{eqn:graphseries2} is equal to
\begin{align}\label{eqn:graphseries3}
&\left\langle \bt(-\psi)^{\vec k_0},\frac{\phi_k}{z-\psi_{m_0+1}}\bigg | \bu^{\vec l_0},\phi_r \right\rangle^{\epsilon}_{m_0+1,n_0+1}\\
&\nonumber\hspace{1.5cm}\cdot\left\langle \bt(-\psi)^{\vec k_\infty},\phi_s,\frac{\phi_{d-2-k}}{-z-\psi_{m_\infty+2}}\bigg |\bu^{\vec l_\infty} \right\rangle^{\epsilon}_{m_\infty+2,n_\infty}
\end{align}

Notice that the first factor in \eqref{eqn:graphseries3} captures the stable contributions to the coefficient of $\phi^k$ in $\partial_{u^r} \cJ^\epsilon(t,u,z)$ and the second factor captures the stable contribution to the coefficient of $\phi_k$ in $\partial_{t_0^s} \cJ^\epsilon(t,u,-z)$. 

\subsubsection*{Step 5: Unstable Terms}

There are two cases, either $m_\infty=n_\infty=0$ or $m_0=0$ and $n_0+1\leq \frac{1}{\epsilon}$.

In the first case, the fixed locus has a single $\phi_s$ point over $\infty$. The localization contribution in this case is obtained from \eqref{eqn:graphseries3} by setting $k=s$ and replacing the right hand term with $1$ which is the unstable contribution to the coefficient of $\phi_{k=s}$ in $\partial_{t_0^s} \cJ^\epsilon(t,u,-z)$.

In the second case, the fixed locus has $n_0+1$ light points stacked up at $0\in\hat C$. The long exact sequence \eqref{longexact} becomes
\begin{align}\label{longexact2}
0&\rightarrow \bigoplus_j \pi_*L_j|_{\hat C}\rightarrow\bigoplus_j\left(\pi_*L_j|_\infty\right)\rightarrow\\
\nonumber&\rightarrow \iota^*\left(W^G\right)^\vee \rightarrow W_\infty^\vee\oplus \bigoplus_j R^1\pi_*L_j|_{\hat C}\rightarrow 0
\end{align}

$L_j|_{\hat C}$ is now a negative bundle for all $j$ so the inital term in \eqref{longexact2} vanishes and we compute
\[
\iota^*e\left(W^G\right)=e(W_\infty)e\left( \bigoplus_j(\pi_*L_j|_{\infty})^\vee\oplus (R^1\pi_*L_j|_{\hat C})^\vee\right).
\]
If $r+\sum(\vec l_0)_i\notin\nar$, then for some $j$, $\pi_*L_j|_{\infty}$ has trivial equivariant Euler class as before and therefore $\iota^*e\left(W^G\right)=0$. If $r+\sum(\vec l_0)_i\in\nar$, then $\pi_*L_j|_{\infty}=0$ for all $j$ and 
\[
\iota^*e\left(W^G\right)=e(W_\infty)e\left( \bigoplus_j (R^1\pi_*L_j |_{\hat C})^\vee\right).
\]

Notice that $\hat C\cong \proj(1,d)$ and
\[
L_j|_{\hat C}^{\otimes\frac{d}{w_j}}\cong\omega_{\hat C, log}\left(-r_j-\sum_i (\vec l_0)_{i,j} \right)\cong\so\left(-1-r_j-\sum_i (\vec l_0)_{i,j} \right)
\]
where $r_j:= r$ mod $\frac{d}{w_j}$. In particular, we have
\[
L_j|_{\hat C}\cong \so\left( -\left( q_j+\langle q_j r \rangle+\sum_i\langle q_j(\vec l_0)_i \rangle \right)[\infty]  \right)
\]
and \u{C}ech representatives of cohomology are given by
\[
H^1(\hat C,L_j|_{\hat C})=\left\langle \frac{1}{x_0^{db}x_1^{q_j+\langle q_j r \rangle+\sum_i\langle q_j(\vec l_0)_i \rangle-b}}\Bigg|\substack{0< b<q_j+\langle q_j r \rangle+\sum_i\langle q_j(\vec l_0)_i \rangle \\ \langle b \rangle = \langle q_j+q_j r+\sum_i q_j(\vec l_0)_i \rangle} \right\rangle
\]
where $x_0$, $x_1$ are the orbifold coordinates on $\hat C$, related to the coarse coordinates on $\proj^1$ by $z_0=x_0^d$, $z_1=x_1$. Therefore, we compute
\[
e\left( \bigoplus_j (R^1\pi_*L_j |_{\hat C})^\vee\right)=\prod_{j=1}^N \prod_{\substack{0\leq b<q_j+\langle q_j r \rangle+\sum_i\langle q_j(\vec l_0)_i \rangle \\ \langle b \rangle = \langle q_j+q_j r+\sum_i q_j(\vec l_0)_i \rangle}}bz.
\]

The normal bundle on this fixed locus contains a summand corresponding to deforming the points away from $0$; this contributes an equivariant factor of $z^{n^0+1}$. Pulling everything together, we see that the contributions from the second case of unstable loci is obtained from \eqref{eqn:graphseries3} by setting 
\[
k=d-2-r-\sum (\vec l_0)_i \text{ mod } d
\]
and replacing the left side with 
\[
\frac{1}{z^{n_0}}\prod_{j=1}^N \prod_{\substack{0\leq b<\langle q_j r \rangle+\sum_i\langle q_j(\vec l_0)_i \rangle \\ \langle b \rangle = \langle q_j r +\sum_i q_j(\vec l_0)_i \rangle}}(b+q_j)z
\]
which corresponds to the unstable contribution to the coefficient of $\phi^k$ in $\partial_{u^r}\cJ^\epsilon(t,u,z)$.

Adding \eqref{eqn:graphseries3} over all stable and unstable loci proves the claim.
\end{proof}

\subsection{Cone Characterization}\label{sec:proof}

In this section we prove a characterization of the $\cH[[u]]$ valued points of $\cL$. This characterization along with Lemma \ref{lem:relation1} implies that the $\cJ$-functions lie on $\cL$.

\begin{lemma}\label{thm:determine}
Suppose $F(t,u,z)\in H'[[t,u]]((z^{-1}))$ has the form 
\begin{equation}\label{eqn:form}
F(t,u,z)=z\phi_0+\bt(-z)+f(u,-z)+\underline F(t,u,z)
\end{equation}
where
\begin{enumerate}[(i)]
\item $f(u,z)\in H'[[u]][z]$ with $f(0,z)=0$,
\item $\underline F(t,u,z)\in H'[[t,u,z^{-1}]]$ only has terms of degree $\geq 2$ in $t,u$, and
\item $F(t,u=0,-z)\in\cL$.
\end{enumerate}
Then $F(t,u,-z)\in\cL$ if and only if the series 
\begin{equation}\label{eqn:master}
\left( \partial_{u^r} F(t,u,z), \partial_{t_{0}^s} F(t,u,-z) \right)
\end{equation}
is regular at $z=0$ for all $r,s$.
\end{lemma}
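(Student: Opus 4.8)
The plan is to prove this characterization by leveraging the defining geometric property of the Lagrangian cone $\cL$: each tangent space $T$ is tangent along exactly $zT$, and $\cL$ is cut out near any of its points by the condition that shifting the point along tangent directions stays (to first order, in a controlled sense) on the cone. More concretely, I would use the well-known reformulation (going back to Givental, and used by Coates--Corti--Iritani--Tseng in exactly this context) that a family $F(t,u,-z)$ lies on $\cL$ if and only if the derivatives $\partial_{s} F(t,u,-z)$ along all parameters $s$ span an isotropic subspace of $\cH$ with respect to $\Omega$, \emph{and} $F$ itself lies on $\cL$ for $u=0$. The pairing condition \eqref{eqn:master} being regular at $z=0$ is exactly the statement that $\Omega\left(\partial_{u^r}F(t,u,z),\,\partial_{t_0^s}F(t,u,-z)\right)=0$ after taking the residue, i.e. the relevant derivatives pair trivially under $\Omega$. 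So the content is: isotropy of the span of all first derivatives, plus the boundary condition (iii), forces $F$ onto $\cL$.

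The key steps, in order, are as follows. \textbf{Step 1 (Reduction to $\partial_{u^r}$ and $\partial_{t_0^s}$).} First I would observe that, by the shape \eqref{eqn:form} of $F$, the derivatives $\partial_{t_j^k}F$ for $j\geq 0$ are explicit: $\partial_{t_0^k}F = \phi_k + (\text{terms of degree} \geq 1)$ and $\partial_{t_j^k}F = (-z)^j\phi_k + \cdots$, and these together with the $\partial_{u^r}F$ (which start at order $u^0$ because of $f(u,-z)$, but with $f(0,z)=0$ they vanish at the origin) span a subspace whose isotropy with respect to $\Omega$ reduces, via the dilaton and string equations satisfied by $\cF^\infty$ and an induction on the order in $(t,u)$, to checking only the mixed pairings $\Omega(\partial_{u^r}F,\partial_{t_0^s}F)$. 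The pairings among $\partial_t$-derivatives alone are automatically controlled because $F(t,0,-z)\in\cL$ by (iii), and the cone is Lagrangian so its tangent spaces are isotropic; one propagates this in the $u$-direction. \textbf{Step 2 (Isotropy $\Rightarrow$ cone membership).} Then I would invoke the characterization of $\cL$ as a Lagrangian cone swept by its (affine) tangent spaces: given that $F(t,0,-z)\in\cL$ with well-defined tangent space $T_0$, and that $\partial_{u^r}F(t,u,-z)$ stays $\Omega$-orthogonal to a maximal isotropic complement, one shows by induction on the $u$-adic order that $F(t,u,-z)$ remains a point of $\cL$ whose tangent space is an $\Q[[u]]$-deformation of $T_0$. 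This is where one uses that tangency happens exactly along $zT$: the condition ``$\partial_{u^r}F$ lies in the tangent space $T$ at $F$'' is equivalent to the regularity of the pairing with a basis of $zT^\perp$, which is exactly \eqref{eqn:master} after recognizing $\partial_{t_0^s}F$ as spanning (a complement related to) $zT$ modulo $u$.

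\textbf{The main obstacle} I anticipate is Step 2: making precise, in the formal-series setting over $\Q[[t,u]]((z^{-1}))$, the passage from ``all first derivatives span an isotropic subspace'' to ``$F$ is a point of the cone,'' since $\cL$ is only a formal germ and one must track tangent spaces as $u$-adic deformations without circularity. The clean way around this is an induction on the $u$-adic order: write $F = F^{(0)} + \sum_{|\alpha|\geq 1} F^{(\alpha)} u^\alpha$ with $F^{(0)}(t,-z)\in\cL$ by (iii); assume $F \bmod u^N \in\cL$; then the order-$N$ part of the isotropy condition says precisely that $\sum_{|\alpha|=N}F^{(\alpha)}u^\alpha$, viewed modulo the tangent space $T$ at $F^{(0)}$, pairs trivially with $zT^\perp$, hence lies in $T$; and since $\cL = \bigcup_{x\in\cL} (x + zT_x)$ with the cone property, adding an element of $T$ (after the dilaton shift bookkeeping) keeps us on $\cL$ to order $N$. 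The converse direction — that $F(t,u,-z)\in\cL$ forces \eqref{eqn:master} to be regular — is immediate from the Lagrangian/cone property, since $\partial_{u^r}F$ and $\partial_{t_0^s}F$ are then both tangent to $\cL$ at the point $F$, and tangent spaces of a Lagrangian cone are isotropic, so $\Omega$ of the two derivatives vanishes and the residue pairing is regular. I would present the forward (hard) direction in detail and dispatch the converse in a sentence.
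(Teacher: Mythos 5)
Your overall strategy (an isotropy/tangent-space induction on the $u$-adic order, in the style of Coates--Corti--Iritani--Tseng) is genuinely different from the paper's, and a version of it could be made to work, but as written it has concrete gaps, all stemming from one conflation. Regularity of $\left(\partial_{u^r}F(t,u,z),\partial_{t_0^s}F(t,u,-z)\right)$ at $z=0$ is strictly stronger than the vanishing of $\Omega\left(\partial_{u^r}F,\partial_{t_0^s}F\right)$: the symplectic form only sees the residue, i.e.\ the coefficient of $z^{-1}$, while regularity demands that every coefficient of $z^{-1-a}$, $a\geq 0$, vanish. So your one-sentence ``converse'' (tangent vectors to a Lagrangian cone are isotropic, hence the pairing is regular) proves only residue vanishing, not the statement of the lemma; an abstract fix needs the overruled property $zT\subset T$, so that $z^{a}\partial_{u^r}F\in T$ for all $a\geq 0$, whereas the paper instead proves this direction by writing the derivatives as explicit correlator series and identifying \eqref{eqn:master} with a manifestly regular equivariant graph-space series, exactly as in the proof of Lemma \ref{lem:relation1}. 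The same conflation breaks your Step 2: since $T$ is Lagrangian, $T^\perp=T$, and $\Omega$-orthogonality to $zT$ only yields $zv\in T$, i.e.\ $v\in z^{-1}T\supsetneq T$, not $v\in T$. To conclude tangency you must use the full regularity (all $a\geq 0$) against the primary derivatives \emph{and} the nontrivial fact that $T$ is generated over (a completion of) $\C[z]$ and the base ring by the primary derivatives of the family --- a consequence of the overruled structure/topological recursion relations, and it must be applied at the $u$-adically deformed family point rather than at $F^{(0)}$; none of this is established in your sketch. Two further mismatches: your Step 1 requires isotropy of the span of \emph{all} first derivatives, but the pairings $\left(\partial_{t}F,\partial_{t}F\right)$ and $\left(\partial_{u}F,\partial_{u}F\right)$ are neither hypotheses of the lemma nor needed, and your induction never produces the input $\hat\bt(z)=\bt(z)+O(u)\in\cH^{+}[[u]]$ that the paper's definition of an $\cH[[u]]$-valued point of $\cL$ actually demands.

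For comparison, the paper's hard direction avoids tangent-space bookkeeping altogether: the relations \eqref{eqn:master} form a triangular recursion that determines all $z^{<0}$ coefficients of $F$ inductively in the $(t,u)$-degree from the regular part and the restriction to $u=0$ (the new coefficient enters with the invertible factor $(n^{r}+1)$ against the leading term $\phi_s$ of $\partial_{t_0^s}F$); the explicit cone point with input $\hat\bt=\bt+f$ satisfies the same recursion --- by the already-proved ``only if'' direction --- with the same initial data, so the two series coincide and $F(t,u,-z)\in\cL$. If you wish to salvage your route, replace ``isotropy'' by the full regularity statement, invoke $zT\subset T$ and the generation of $T$ by primary derivatives, and run the induction at the family point; but be aware that doing so essentially re-derives, in geometric language, the same recursion the paper exploits directly.
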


\begin{proof}
First suppose $F(t,u,z)$ satisfies (i)-(iii) and $F(t,u,-z)$ lies on $\cL$. We show that \eqref{eqn:master} is regular at $z=0$.  By definition, $F$ has the form
\[
F(t,u,z)=z\phi_0+\hat\bt(-z)+\sum_{k}\phi^k\left\langle\left\langle \frac{\phi_k}{z-\psi_1}\right\rangle\right\rangle_{1}^{\infty}\left(\hat t \right).
\]
where $\hat\bt(z)=\bt(z)+f(u,z)$. We have
\[
\partial_{u^r} F(t,u,z)=\partial_{u^r} \hat\bt(-z)+\sum_{k}\phi^k\left\langle\left\langle \partial_{u^r} \hat\bt(\psi_1),\frac{\phi_k}{z-\psi_2} \right\rangle\right\rangle_{2}^{\infty}\left(\hat t\right)
\]
and
\[
\partial_{t_{0}^s} F(t,u,-z)=\phi_s+\sum_{k}\phi^k\left\langle\left\langle \phi_s ,\frac{\phi_k}{-z-\psi_2}\right\rangle\right\rangle_{2}^{\infty}\left(\hat t\right).
\]

Proceeding exactly as in the proof of Lemma \ref{lem:relation1}, we see that the series \eqref{eqn:master} is equal to the equivariant series
\begin{equation*}\label{eqseries}
\left\langle\left\langle \partial_{u^r}\hat\bt(\psi_{1}),\phi_s \big | ev_{1}^*([0])\cup ev_{2}^*([\infty])\right\rangle\right\rangle^{G,\infty}_{2}
\end{equation*}
which is regular at $z=0$ by definition.

Now suppose $F$ has the form \eqref{eqn:form} and satisfies (i)-(iii) and \eqref{eqn:master}, we show that $F(t,u,-z)$ lies on $\cL$.  To this end, we show that $F$ is uniquely determined from its regular part, its restriction to $u=0$, and the recursions \eqref{eqn:master}. To see this, write
\[
\underline F=\sum f_{\vec m,\vec n,j,s}\frac{t^{\vec m}u^{\vec n}}{z^j}\phi^s
\]
where $t^{\vec m}=\sum (t_j^k)^{m_j^k}$ and $u^{\vec n}=\sum (u^k)^{n^k}$. Then \eqref{eqn:master} determines the coefficients by induction on $(|\vec n|,|\vec m|)$.  Indeed, assume we know $ f_{\vec m',\vec n',j',s'}$ for all $(|\vec n'|,|\vec m'|)\leq(|\vec n|,|\vec m|)$ and suppose we want to compute the coefficient $ f_{\vec m,(\vec n,r),j,s}$. We consider the relation 
\[
\left( \partial_{u^r} F(t,u,z), \partial_{t_0^s} F(t,u,-z) \right)\left[\frac{t^{\vec m}u^{\vec n}}{z^j}\right]=0.
\]
There is an initial term equal to $(n^r+1)f_{\vec m,(\vec n,r),j,s}$ and all other terms are determined by induction and $f(u,z)$. This method recursively determines $F(t,u,z)$ from $F(t,u=0,z)$.

Setting $\hat\bt(z)=\mathbf{t}(z)+f(u,z)$, we conclude that
\[
F(t,u,z)=z\phi_0+\hat\bt(-z)+\sum_{k}\phi^k\left\langle\left\langle \frac{\phi_k}{z-\psi_{1}}\right\rangle\right\rangle_{1}^{\infty}\left(\hat t\right)
\]
because both sides agree along the restriction $u=0$, they have the same regular part, and they both satisfy the same recursion which determines them uniquely from this initial data.  Therefore, $F(t,u,-z)\in\cL$.
\end{proof}

Theorem \ref{thm} now follows immediately from Lemmas \ref{lem:relation1} and \ref{thm:determine}.

\section{Applications}\label{sec:mirror}

We now extract several corollaries from Theorem \ref{thm}. Define $J_0^\epsilon(u)\in \Q[[u]]$ and $J_1^\epsilon(u,z)\in \cH_+[[u]]$ by
\[
\cJ^\epsilon(t,u,z)=J_0^\epsilon(u)\phi_0z+\bt(z)+J_1^\epsilon(u,z)+O\left(\frac{1}{z}\right).
\]
Define the change of variable $\btau^\epsilon(t,u,z)$ by
\[
\btau^\epsilon(t,u,z):=J_0^\epsilon(u)\bt(z)-J_1^\epsilon(u,z)
\]
and write $\tau^\epsilon=(\tau^\epsilon(t,u)_j^k)$. We have the following comparison of generating series.

\begin{theorem}\label{cor1}
\begin{equation}\label{jfxnmatch}
\frac{\cJ^{\epsilon_1}(\tau^{\epsilon_1},u,z)}{J_0^{\epsilon_1}(u)} = \frac{\cJ^{\epsilon_2}(\tau^{\epsilon_2},u,z)}{J_0^{\epsilon_2}(u)}
\end{equation}
\end{theorem}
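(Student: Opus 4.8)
The plan is to deduce Theorem~\ref{cor1} from Theorem~\ref{thm} together with the defining geometry of the Lagrangian cone $\cL$, using the uniqueness statement built into Lemma~\ref{thm:determine}. The key structural fact is Givental's observation that every $\cH[[u]]$-valued point of $\cL$ lies on a unique tangent space $T$, that $T$ is an $H'[[u]]((z^{-1}))$-submodule closed under multiplication by $z$, and that the intersection $\cL\cap zT$ is parametrized by the points of $zT$ via the projection along $\cH_-$. Concretely, once we know from Theorem~\ref{thm} that $\cJ^{\epsilon_i}(t,u,-z)$ lies on $\cL$, we want to show that after the stated rescaling and change of variables both sides become the \emph{same} family of points sweeping out a neighborhood of the FJRW $\cJ$-function $\cJ^\infty$ inside $\cL$.

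First I would record the shape of $\cJ^\epsilon$: by definition it has the form $J_0^\epsilon(u)\phi_0 z + \bt(z) + J_1^\epsilon(u,z) + O(1/z)$, so $\cJ^\epsilon/J_0^\epsilon$ has the form $\phi_0 z + \frac{1}{J_0^\epsilon}\big(\bt(z)+J_1^\epsilon(u,z)\big) + O(1/z)$, i.e. its $z$-coefficient is normalized to $\phi_0$. Next, the change of variables $\btau^\epsilon(t,u,z) = J_0^\epsilon(u)\bt(z) - J_1^\epsilon(u,z)$ is precisely engineered so that substituting $t \mapsto \tau^\epsilon$ replaces the $\cH_+$-part $\bt(z)+J_1^\epsilon(u,z)$ by $J_0^\epsilon(u)\bt(z)$; dividing by $J_0^\epsilon(u)$ then yields a point of $\cL$ (a scalar multiple of a point of a cone is again on the cone, and the rescaling is by a unit $1+O(u)$) whose $\cH_+$-projection, modulo the $\phi_0 z$ term, is exactly $\bt(z)$ — with \emph{no} $u$-dependence in the polynomial-in-$z$ part and no $O(1/z)$ ambiguity once we also track that it is an $\cH[[u]]$-valued point in the sense of the definition preceding Theorem~\ref{thm}. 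In other words, $\cJ^\epsilon(\tau^\epsilon,u,-z)/J_0^\epsilon(u)$ is the $\cH[[u]]$-valued point of $\cL$ whose associated parameter $\hat\bt(z)$ (in the normal form \eqref{eqn:genform}) equals $\bt(-z)$ itself, independent of $\epsilon$.

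The crucial input is then that an $\cH[[u]]$-valued point of $\cL$ is \emph{uniquely} determined by its image $\hat\bt(z) \in \cH^+[[u]]$ under the projection along $\cH^-$: this is exactly the content of the normal form \eqref{eqn:genform}, since the point is recovered as $-z\phi_0 + \hat\bt(z) + \sum_k \phi^k\langle\langle \phi_k/(-z-\psi)\rangle\rangle_1^\infty(\hat t)$. Applying this with $\hat\bt(z) = \bt(-z)$ for both $\epsilon_1$ and $\epsilon_2$ forces $\cJ^{\epsilon_1}(\tau^{\epsilon_1},u,z)/J_0^{\epsilon_1}(u) = \cJ^{\epsilon_2}(\tau^{\epsilon_2},u,z)/J_0^{\epsilon_2}(u)$, which is \eqref{jfxnmatch}. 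I would also note that $\tau^\epsilon(t,u) = t + O(u)$, so it is an invertible change of variables as claimed, and that $J_0^\epsilon(u) = 1 + O(u)$ since at $u=0$ the unstable term reduces to $z\phi_0$; these remarks are needed to make sense of the division and the substitution as operations on formal power series.

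The main obstacle, and the step requiring genuine care rather than formal manipulation, is verifying that $\cJ^\epsilon(\tau^\epsilon,u,-z)/J_0^\epsilon(u)$ genuinely has the normal form \eqref{eqn:genform} with $\hat\bt(z)$ \emph{exactly} $\bt(z)$ and no residual $u$-dependence in the $\cH^+$ part. This is where one must chase the definitions of $J_0^\epsilon$, $J_1^\epsilon$, and $\btau^\epsilon$ carefully: after substitution, the $\cH^+$-part of $\cJ^\epsilon(\tau^\epsilon,u,z)$ is $J_0^\epsilon\phi_0 z + \btau^\epsilon(\psi?) + J_1^\epsilon = J_0^\epsilon\phi_0 z + (J_0^\epsilon\bt - J_1^\epsilon) + J_1^\epsilon = J_0^\epsilon(\phi_0 z + \bt)$, but one must confirm that substituting $t\mapsto\tau^\epsilon$ into the nonlinear double-bracket tail does not alter the \emph{polynomial-in-$z$} part — i.e. that $\tau^\epsilon$ only enters through the already-accounted-for $\bt(z)$ slot and the remaining double-bracket contributions are $O(1/z)$. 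Given the structure of $\cJ^\epsilon$ (its tail $\sum_k \phi^k\langle\langle\phi_k/(z-\psi)\rangle\rangle_1^\epsilon$ is manifestly $O(1/z)$), this should go through, but it is the point at which the proof must actually engage with the definitions rather than quoting the cone formalism.
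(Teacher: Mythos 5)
Your proposal is correct and follows essentially the same route as the paper: both sides lie on $\cL$ by Theorem \ref{thm} together with the cone (scaling) property, the change of variables $\btau^\epsilon=J_0^\epsilon\bt-J_1^\epsilon$ is designed exactly so that the regular parts of both sides become $-z\phi_0+\bt(-z)$, and equality follows because $\cL$ is a graph over $\cH^+$, i.e.\ a point of $\cL$ is determined by its regular part (equivalently, by $\hat\bt$ in the normal form \eqref{eqn:genform}). The ``main obstacle'' you flag is indeed harmless, for the reason you give: the double-bracket tail of $\cJ^\epsilon$ is manifestly $O(1/z)$ for every value of $t$, and $J_0^\epsilon,J_1^\epsilon$ depend only on $u$, so the substitution $t\mapsto\tau^\epsilon$ affects the regular part only through the $\bt$-slot.
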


\begin{proof}
Notice that $J_0^\epsilon(u)=1+O(u)$, so both sides make sense as elements of $\cH[[u]]$. Since $\cL$ is a cone, both sides of \eqref{jfxnmatch} lie on $\cL$ by Theorem \ref{thm} (upon negating $z$). The change of variables is defined precisely so that the two sides of \eqref{jfxnmatch} agree in they regular parts. The result now follows from the simple observation that $\cL$ is a graph over $\cH^+$, meaning that points on $\cL$ are uniquely determined by their regular part.
\end{proof}

We now turn our attention to the big $I$ function 
\[
\bI(u,z)=z\phi_0\sum_{a_i\geq 0}\prod_i\frac{1}{a_i!}\left(\frac{u^i\phi_i}{z}\right)^{a_i}\prod_{j=1}^N\prod_{\substack{0\leq b<\sum_ia_i\langle iq_j\rangle\\ \langle b \rangle =\langle \sum_i a_i iq_j\rangle}}(b+q_j)z.
\] 
Since $\bI(u,z)$ has unbounded positive powers of $z$, we need to work with a completion of our base ring $\Q[[u]]$. In particular, we work over the $u$-adic completion of $\Q[[u]]$ defined in terms of the total degree in the $u^i$ where $\deg(u^i)=i$. We take $\cH$ to contain series $\sum_{j\in\Z} h_jz^j$ which are possibly infinite in both directions, but we require that $\lim_{j\rightarrow\infty}h_j\rightarrow 0$. With this convention, Theorem \ref{thm} implies that $\bI(u,-z)$ lies on $\cL$.

One of the fundamental results in Givental's approach to axiomatic GW theories \cite{g:sgofs} is that the cone $\cL$ is swept by a finite-dimensional family of semi-infinite linear spaces. In particular, if $T$ is a tangent space of $\cL$, then $T$ is tangent to $\cL$ exactly along $zT$. Moreover, the cone is completely determined by any transverse slice of dimensions $\dim H_W'$; in particular,
\[
\cL=\bigsqcup_{\bt_0\in H'} zT_{J(t_0,-z)}\cL. 
\]
where the big $J$-function $J(t_0,z)$ is defined by restricting $\cJ^\infty(t,z)$ to primary variables $t_0=(t_0^k)$. Consequently, $\bI(u,-z)\in zT_{J(\sigma(u),-z)}$ where $\bm\sigma(u)\in H'$ is the unique point in the intersection 
\[
zT_{\bI(u,-z)}\cL\cap (-z\phi_0+z\cH^-)\subset \cL\cap(-z\phi_0+z\cH^-).
\]
The map $u\rightarrow \bm\sigma(u)$ has the following important property.

\begin{lemma}\label{determinecone}
Let $D$ be the dimension of $H'$. Then $\bm\sigma$ defines an isomorphism $\bm\sigma:\Q^D\rightarrow H'$ when restricted to a formal neighborhood of the origin.
\end{lemma}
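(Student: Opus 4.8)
The plan is to derive the statement from the formal inverse function theorem. Since $\bI(u,-z)$ depends on $u$ through a power series and Givental's ruling/slice operations on $\cL$ are formal, $\bm\sigma\colon\Q^{D}\to H'$ is given by a power series in $u=(u^{k})_{k\in\nar}$, and it restricts to an isomorphism on a formal neighborhood of the origin as soon as $\bm\sigma(0)=0$ and the linearization $d\bm\sigma|_{0}\colon\Q^{D}\to H'$ is invertible. Here $D=\dim H'=\#\nar$, and the natural coordinates on the source are the $u^{k}$, $k\in\nar$.

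First I would record the value and first-order behavior of $\bI$. In the defining sum, the term with all $a_{i}=0$ contributes $z\phi_{0}$. For a term with $\sum a_{i}=1$, say $a_{i_0}=1$, the inner product $\prod_{0\le b<\langle i_0q_{j}\rangle,\ \langle b\rangle=\langle i_0q_{j}\rangle}(b+q_{j})z$ is empty for every $j$ (if $\langle i_0q_j\rangle=0$ the range is empty, and if $\langle i_0q_j\rangle\in(0,1)$ the only non-negative $b$ with $\langle b\rangle=\langle i_0q_j\rangle$ in $[0,1)$ is $b=\langle i_0q_j\rangle$ itself, which fails $b<\langle i_0q_j\rangle$), so the term equals $z\phi_{0}\cdot\tfrac{u^{i_0}\phi_{i_0}}{z}=u^{i_0}\phi_{i_0}$ using $\phi_{0}\cdot\phi_{i_0}=\phi_{i_0}$. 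Every term with $\sum a_{i}\ge 2$ has total degree $\ge 2$ in the $u^{k}$. Hence
\[
\bI(u,z)=z\phi_{0}+\bu+O(u^{2}),\qquad \bI(u,-z)=-z\phi_{0}+\bu+O(u^{2}),
\]
with no $z$-dependence in the linear term. On the FJRW side, $\cR^{d}_{(k)}$ and $\cR^{d}_{(k_1,k_2)}$ have negative virtual dimension by \eqref{eqn:dim1}, so all one- and two-pointed genus-zero narrow invariants vanish; therefore $J(\bt_{0},-z)=-z\phi_{0}+\bt_{0}+O(\bt_{0}^{2})$ (the bracket term in $\cJ^\infty$ is $O(\bt_0^2)$ and lies in $\cH^{-}$). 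In particular $\bI(0,-z)=-z\phi_{0}=J(0,-z)$ lies on the slice $-z\phi_{0}+z\cH^{-}$, so the ruling $zT_{-z\phi_{0}}\cL$ meets the slice in $-z\phi_{0}$, giving $\bm\sigma(0)=0$.

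To compute $d\bm\sigma|_{0}$, combine the two expansions: since $\bu^{2}=O(u^{2})$, we get $\bI(u,-z)=J(\bu,-z)+O(u^{2})$ as $\cH[[u]]$-valued points of $\cL$. By the structure theory of $\cL$ recalled above, every point of $\cL$ lies on a unique ruling $zT_{J(\bt_{0},-z)}\cL$, and the assignment of its base point $J(\bt_{0},-z)$ (equivalently, of $\bt_{0}\in H'$) is a formal map on $\cL$; applied to the formal curve $u\mapsto\bI(u,-z)$ it returns $\bm\sigma(u)$, and applied to $u\mapsto J(\bu,-z)$ it returns $\bu$. Since the two curves agree modulo $u^{2}$, so do their images: $\bm\sigma(u)=\bu+O(u^{2})$. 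Thus $d\bm\sigma|_{0}$ is the tautological isomorphism $(u^{k})_{k\in\nar}\mapsto\sum_{k\in\nar}u^{k}\phi_{k}$, and the formal inverse function theorem completes the proof.

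The step I expect to be the main obstacle is the last one: making rigorous that the assignment to a point of $\cL$ of the base point of its ruling is a genuine formal (hence $u$-adically continuous) map, so that $O(u^{2})$-agreement of $\bI(u,-z)$ and $J(\bu,-z)$ forces $O(u^{2})$-agreement of $\bm\sigma(u)$ and $\bu$. This is exactly where one must use, with care, Givental's theorem \cite{g:sgofs} that $\cL$ is a cone ruled by the finite-dimensional family $\{zT_{J(\bt_{0},-z)}\cL\}_{\bt_{0}\in H'}$ meeting the transverse slice $-z\phi_{0}+z\cH^{-}$ in precisely the points $J(\bt_{0},-z)$. The remaining ingredients—the empty-product computation for $\bI$ and the dimension vanishing for low-point FJRW invariants—are routine.
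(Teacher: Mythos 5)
Your proposal is correct and follows essentially the same route as the paper, which simply observes $\bI(u,-z)=-z\phi_0+\bu+O(u^2)$ by definition and concludes $\bm\sigma(u)=\bu+O(u^2)$, hence invertible over $\Q[[u]]$. Your extra details (the empty-product check for the linear terms and the formality of the ruling/base-point assignment from Givental's structure theorem) are just a more explicit writing-out of the same argument.
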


\begin{proof}
By definition,
\[
\bI(u,-z)=-z\phi_0+\bu+O(u^2).
\]
Therefore, $\bm\sigma(u)=\bu+O(u^2)$ implying that $\bm\sigma(u)$ is invertible over $\Q[[u]]$.
\end{proof}

In particular, since $J(t,-z)$ determines the entire cone $\cL$, Lemma \ref{determinecone} implies the mirror theorem, proved previously by Chiodo--Iritani--Ruan for $q=1$ and independently by Acosta for $q\neq1$.

\begin{theorem}[\cite{cir:lgcy,a:thesis}]\label{cor2}
$\bI(u,-z)$ determines the entire cone $\cL$.
\end{theorem}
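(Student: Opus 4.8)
The plan is to combine Theorem \ref{thm} with the finite-dimensional reconstruction property of the Lagrangian cone $\cL$ described in Givental's formalism. By Theorem \ref{thm} (applied with the convention that $\cH$ contains two-sided series $\sum_j h_j z^j$ with $h_j \to 0$ as $j\to\infty$, over the $u$-adic completion of $\Q[[u]]$), $\bI(u,-z)$ is an $\cH[[u]]$-valued point of $\cL$. The key structural fact is that $\cL$ is swept out by the semi-infinite subspaces $zT_p\cL$ as $p$ ranges over a transverse slice of dimension $D=\dim H'$, and any one such slice—in particular the family $\{J(t_0,-z)\}_{t_0\in H'}$ of big $J$-functions—determines all of $\cL$. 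So the task reduces to showing that the slice $\bm\sigma(u)$ traced out by $\bI$ is a full-dimensional (formal) slice, i.e. that $\bm\sigma$ is a local isomorphism near the origin.

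First I would recall the precise statement that each tangent space $T$ to $\cL$ is tangent along exactly $zT$, so that for any point $\ell\in\cL$ there is a well-defined tangent space $T_\ell\cL$, and $zT_\ell\cL\cap(-z\phi_0+z\cH^-)$ consists of a single point, giving the map $\ell\mapsto$ (this point) $\in \cL\cap(-z\phi_0+z\cH^-)$; composing with the identification $\cL\cap(-z\phi_0+z\cH^-)\cong H'$ via $J(t_0,-z)\leftrightarrow t_0$ yields a map whose image slice determines $\cL$. Second, I would apply this construction to $\ell=\bI(u,-z)$ to obtain $\bm\sigma(u)\in H'$, which is exactly the setup already in place before Lemma \ref{determinecone}. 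Third, I would invoke Lemma \ref{determinecone}: the leading-order expansion $\bI(u,-z)=-z\phi_0+\bu+O(u^2)$ forces $\bm\sigma(u)=\bu+O(u^2)$, hence $\bm\sigma$ is invertible as a formal map $\Q^D\to H'$. Finally, since $\{J(t_0,-z):t_0\in H'\}$ already determines $\cL$ and $\bm\sigma$ surjects onto a formal neighborhood of the origin in $H'$, the family $\{\bI(u,-z):u\}$ ranges (after the reparametrization $\bm\sigma$) over enough of this slice to reconstruct $\cL$; concretely, $zT_{\bI(u,-z)}\cL = zT_{J(\bm\sigma(u),-z)}\cL$, and letting $u$ vary recovers every semi-infinite subspace sweeping $\cL$.

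The main obstacle—or rather the point requiring the most care—is the analytic bookkeeping: $\bI$ has unbounded positive powers of $z$, so a priori it is not an element of the original $\cH=H'_W((z^{-1}))$, and one must check that the completion conventions (two-sided series with $h_j\to 0$, base ring $u$-adically completed) are genuinely compatible with Givental's cone formalism and with the proof of Theorem \ref{thm}. In particular, one must verify that the tangent-space construction and the reconstruction statement $\cL=\bigsqcup_{t_0} zT_{J(t_0,-z)}\cL$ survive passage to this completion, and that the intersection $zT_{\bI(u,-z)}\cL\cap(-z\phi_0+z\cH^-)$ is still transverse and zero-dimensional in the completed setting. Once the formal framework is set up correctly, the argument is essentially immediate from Theorem \ref{thm} plus Lemma \ref{determinecone}; the substance lies entirely in that Theorem and in Givental's reconstruction theorem, both of which we are entitled to use.
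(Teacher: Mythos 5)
Your argument is correct and is essentially the paper's own proof: both rest on Theorem \ref{thm} placing $\bI(u,-z)$ on $\cL$ (in the completed setting), Givental's description of $\cL$ as swept by the spaces $zT_{J(t_0,-z)}\cL$, the definition of $\bm\sigma(u)$ via the intersection $zT_{\bI(u,-z)}\cL\cap(-z\phi_0+z\cH^-)$, and Lemma \ref{determinecone} to invert $\bm\sigma$. No substantive difference from the paper's route.
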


In fact, one can recursively invert $\bm\sigma(u)$ for low powers of $u$ and thus determine the FJRW invariants purely in terms of $\bI$. For examples of how this process is carried out explicitly, see \cite{ccit:saotmt}.

A simple calculation shows that the power of $z$ which appears in the $\prod_i(u^i)^{a_i}$ coefficient of the big $I$-function is at most
\[
1+\sum_i a_i(\deg(\phi_i)-1)
\]
Therefore, since the small $I$-function is defined by restricting to degree $\leq 1$ insertions, it has the form
\[
I(\underline{u},z)=I_0(\underline{u})z\phi_0+I_1(\underline{u})+O\left(\frac{1}{z}\right)
\]
for some $I_0(\underline{u})=1+O(\underline{u})\in\Q[[\underline{u}]]$ and $I_1(u)\in H'[[\underline{u}]]$. We recover the usual LG mirror theorem which relates the small $I$-function to the $J$-function via a simple change of variables.

\begin{corollary}\label{cor3}
\[
\frac{I(\underline{u},z)}{I_0(\underline{u})}=J(\eta(\underline{u}),z).
\]
where $\bm\eta(\underline{u})=\frac{I_1(\underline{u})}{I_0(\underline{u})}$.
\end{corollary}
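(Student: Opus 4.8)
\textbf{Proof plan for Corollary \ref{cor3}.}
The strategy is to identify the restricted map $\bm\sigma$ from Lemma \ref{determinecone} with the map $\bm\eta$, i.e. to show that when the big $I$-function is restricted to degree $\leq 1$ insertions, the point $\bm\sigma(\underline u)$ on the base of the cone produced by the semi-infinite-subspace description becomes exactly $\bm\eta(\underline u) = I_1(\underline u)/I_0(\underline u)$, and simultaneously the scalar rescaling needed to land $\bI$ in the slice $-z\phi_0 + z\cH^-$ is exactly division by $I_0(\underline u)$. First I would recall from the discussion preceding Theorem \ref{cor2} that $\bI(u,-z)\in zT_{J(\bm\sigma(u),-z)}\cL$, where $\bm\sigma(u)$ is characterized as the unique primary point whose $J$-function spans the tangent space through $\bI(u,-z)$; equivalently $J(\bm\sigma(u),-z)$ is the unique point of $\cL\cap(-z\phi_0 + z\cH^-)$ lying in the tangent space $zT_{\bI(u,-z)}\cL$. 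The key observation is that the $z$-degree bound $1+\sum_i a_i(\deg(\phi_i)-1)$ shows that after restricting to $\deg(\phi_i)\leq 1$ insertions, $I(\underline u,z)$ already has the shape $I_0(\underline u)z\phi_0 + I_1(\underline u) + O(1/z)$ with no higher powers of $z$; dividing by $I_0(\underline u)$ (a unit in $\Q[[\underline u]]$ since $I_0 = 1 + O(\underline u)$) produces a series of the form $z\phi_0 + \bm\eta(\underline u) + O(1/z)$, which lies in $-z\phi_0 + z\cH^-$ after negating $z$ (that is, $I(\underline u,-z)/I_0(\underline u) = -z\phi_0 + \bm\eta(\underline u) + O(1/z)$, an $\cH$-valued point of the affine slice).

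Next I would invoke the cone properties: since $\cL$ is a cone, $I(\underline u,-z)/I_0(\underline u)\in\cL$ by Theorem \ref{cor2} (equivalently Theorem \ref{thm} specialized and rescaled), and since it lies in $-z\phi_0 + z\cH^-$, it is the unique such point in the ruling $zT_{\bI(u,-z)}\cL$ (the scaling does not change the tangent space, only the point along the ruling). But that unique point, by the characterization of $\bm\sigma$, is precisely $J(\bm\sigma(u),-z)$ evaluated at the restricted $\underline u$; and since $J(\bm\sigma,-z) = -z\phi_0 + \bm\sigma + O(1/z)$, comparing the $z^0$-coefficients forces $\bm\sigma(\underline u) = \bm\eta(\underline u)$ on the degree $\leq 1$ locus. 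Therefore $I(\underline u,-z)/I_0(\underline u) = J(\bm\eta(\underline u),-z)$, and flipping the sign of $z$ back gives the stated identity $I(\underline u,z)/I_0(\underline u) = J(\bm\eta(\underline u),z)$.

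The main point requiring care — and the step I expect to be the real obstacle — is not the formal cone manipulation but the verification that restricting the big $I$-function to degree $\leq 1$ insertions is compatible with the semi-infinite-subspace description of $\cL$ in the way just sketched: one must check that $I(\underline u,-z)$ genuinely lands on a single ruling $zT$ with $T$ a tangent space of $\cL$ (as opposed to merely lying on $\cL$), so that dividing by the leading scalar moves it to the canonical transverse slice. This is exactly where the $z$-degree estimate $1+\sum_i a_i(\deg(\phi_i)-1)$ does the work: it guarantees that the polynomial-in-$z$ part of $I(\underline u,z)$ is at most linear, which is what allows the identification with $zT_{J(\bm\eta,-z)}\cL$ and pins down the scalar ambiguity to be precisely $I_0(\underline u)$. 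Once this compatibility is in hand, the rest is the standard Givental-style argument that a point on $\cL$ with prescribed linear-in-$z$ part and lying on a given ruling is determined, and equals the $J$-function at the corresponding base point.
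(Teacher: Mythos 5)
Your argument is correct, and at bottom it runs on the same mechanism as the paper's proof: both sides lie on $\cL$ after negating $z$, and a point of $\cL$ is pinned down by its regular ($\cH^+$) part. The paper's proof is exactly that one line --- $I(\underline{u},-z)/I_0(\underline{u})$ lies on $\cL$ because $\bI(u,-z)$ does (Theorem \ref{thm}), specializing $u$ to $\underline{u}$ and rescaling by the unit $I_0(\underline{u})=1+O(\underline{u})$ preserve the cone, and its regular part $-z\phi_0+\bm\eta(\underline{u})$ matches that of $J(\bm\eta(\underline{u}),-z)$; since $\cL$ is a graph over $\cH^+$ (the same observation used in the proof of Theorem \ref{cor1}), the two points coincide. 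Your detour through the ruling structure and the map $\bm\sigma$ of Lemma \ref{determinecone} is sound but unnecessary for this corollary, and the step you single out as the ``real obstacle'' --- verifying that $I(\underline{u},-z)$ genuinely lands on a single ruling $zT$ --- is automatic: every point $x$ of Givental's cone satisfies $x\in zT_x\cL$, and multiplication by the scalar $1/I_0(\underline{u})$ changes neither membership in $\cL$ nor the tangent space along the ray, so no separate compatibility check is needed. The only substantive input beyond the wall-crossing theorem is the $z$-degree estimate guaranteeing $I(\underline{u},z)=I_0(\underline{u})z\phi_0+I_1(\underline{u})+O(1/z)$, which you invoke correctly and which the paper records immediately before the corollary; once that is in place, your identification $\bm\sigma(\underline{u})=\bm\eta(\underline{u})$ by comparing $z^0$-coefficients and the paper's direct comparison of regular parts are two phrasings of the same uniqueness argument.
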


\begin{proof}
Both sides lie on $\cL$ (after negating $z$) and they agree in their regular part.
\end{proof}

\begin{remark}
We note that the statement of Corollary \ref{cor3} is only interesting for $q\leq 1$. For $q>1$, it is easily checked that $J=I=z\phi_0$.\end{remark}

As a last  consequence of our wall-crossing formula, we derive an enumerative description of Corollary \ref{cor3}.

\begin{theorem}\label{cor4}
\[
\frac{I(\underline{u},z)}{I_0(\underline{u})}=z\phi_0+z\sum_k \phi^k\left\langle\left\langle \phi_0,\frac{\phi_k}{z-\psi_2}\right\rangle\right\rangle^0_2(0,\underline u)
\]
and
\[
\bm\eta(\underline{u})=\sum_k \phi^k\left\langle\left\langle \phi_0,\phi_k\right\rangle\right\rangle^0_2(0,\underline u)
\]
\end{theorem}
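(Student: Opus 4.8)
\emph{Overall strategy.} The plan is to recognize the right-hand side of the first equality as a derivative of the $\epsilon=0$ descendant generating function in the unit direction, to identify a normalization of that function as a point of $\cL$, and then to invoke the string equation of FJRW theory. We use the completion convention introduced for $\bI$, and we take for granted that the $\epsilon=0$ invariants are the stable $\epsilon\to 0$ limits of the $\epsilon$-invariants (each graded piece stabilizes once $\epsilon$ is small relative to the number of light points), so that, by Theorem \ref{thm} and passing to the limit, $\cJ^0(t,u,-z)$ is an $\cH[[t,u]]$-valued point of $\cL$. Differentiating the formula \eqref{jxn2} for $\cJ^0$ in the variable $t_0^0$ (the coefficient of $\phi_0 z^0$ in $\bt$), and using that $\partial_{t_0^0}$ acts on a double bracket by inserting one $\phi_0$ with no $\psi$-class, that the $\bI$-part of $\cJ^0$ is $t$-independent, and that $\partial_{t_0^0}\bt(-z)|_{t=0}=\phi_0$, one finds
\[
z\,\partial_{t_0^0}\cJ^0(t,u,z)\big|_{t=0}=z\phi_0+z\sum_k\phi^k\left\langle\left\langle\phi_0,\frac{\phi_k}{z-\psi_2}\right\rangle\right\rangle^0_2(0,u);
\]
restricting $u$ to degree $\le 1$ light insertions $\underline{u}$, this is exactly $z$ times the series appearing in Theorem \ref{cor4}.

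\emph{Recognizing a normalized $J$-function.} Next I would restrict $\cJ^0$ to primary heavy variables $\bt(z)=t_0\in H'$ and to $\underline{u}$. Its unstable part is the small $I$-function $I(\underline{u},z)=I_0(\underline{u})z\phi_0+I_1(\underline{u})+O(1/z)$, the $\bt(-z)$-term contributes the constant $t_0$, and the remaining double bracket $\sum_k\phi^k\langle\langle\frac{\phi_k}{z-\psi_1}\rangle\rangle_1^0(t_0,\underline{u})$ is $O(1/z)$; hence the $\cH_+$-part of $\cJ^0(t_0,\underline{u},z)$ is $I_0(\underline{u})z\phi_0+I_1(\underline{u})+t_0$, so in particular its $z\phi_0$-coefficient is $I_0(\underline{u})$. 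Since $\cJ^0(t_0,\underline{u},-z)$ lies on $\cL$ (restricting the parameters of the $\cL$-valued $\cJ^0$) and $\cL$ is a cone, $\cJ^0(t_0,\underline{u},-z)/I_0(\underline{u})$ lies on $\cL$ with $\cH_+$-part $-z\phi_0+(I_1(\underline{u})+t_0)/I_0(\underline{u})$; as $\cL$ is a graph over $\cH_+$ and $J(\tau,-z)$ is the unique point of $\cL$ whose $\cH_+$-part is $-z\phi_0+\tau$, this forces
\[
\frac{\cJ^0(t_0,\underline{u},z)}{I_0(\underline{u})}=J\!\left(\frac{I_1(\underline{u})+t_0}{I_0(\underline{u})},z\right).
\]

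\emph{Differentiating and applying the string equation.} Then I would apply $z\,\partial_{t_0^0}$ at $t_0=0$ to both sides. By the first display, the left side equals $I_0(\underline{u})^{-1}$ times $z\phi_0+z\sum_k\phi^k\langle\langle\phi_0,\frac{\phi_k}{z-\psi_2}\rangle\rangle_2^0(0,\underline{u})$. For the right side, the chain rule together with the string equation of FJRW theory (equivalently, the $3$-point unit axiom together with the cone structure of $\cL$), which gives $z\,\partial_{\tau^0}J(\tau,z)=J(\tau,z)$, produces $I_0(\underline{u})^{-1}J(\bm\eta(\underline{u}),z)$ with $\bm\eta(\underline{u})=I_1(\underline{u})/I_0(\underline{u})$; by Corollary \ref{cor3} this is $I_0(\underline{u})^{-1}\cdot I(\underline{u},z)/I_0(\underline{u})$. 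Cancelling $I_0(\underline{u})$ gives the first equality of Theorem \ref{cor4}, and extracting the $z^0$-coefficient of that equality --- $I_1(\underline{u})/I_0(\underline{u})=\bm\eta(\underline{u})$ on the left, $\sum_k\phi^k\langle\langle\phi_0,\phi_k\rangle\rangle_2^0(0,\underline{u})$ on the right --- gives the second.

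\emph{Main obstacle.} The hard part will be the two foundational inputs used above: the existence of the $\epsilon\to 0$ limits defining the $\epsilon=0$ invariants and the fact that $\cJ^0(t,u,-z)$ remains a point of $\cL$ under the completion; and the string equation $z\,\partial_{\tau^0}J(\tau,z)=J(\tau,z)$, which rests on $\phi_0$ being the unit of FJRW theory (so $0\in\nar$), the $3$-point unit axiom, and the usual string equation for $n\ge 4$ insertions. Granting these, everything else reduces to bookkeeping on $\cL$ using only the graph property over $\cH_+$ and the Givental cone structure.
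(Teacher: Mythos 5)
Your proposal is correct and follows essentially the same route as the paper: both arguments combine the wall-crossing statement at the extreme chambers $\epsilon=\infty$ and $\epsilon=0$ with the string equation in the form $z\,\partial_{\tau^0}J(\tau,z)=J(\tau,z)$ and the mirror identification $I/I_0=J(\bm\eta,z)$, and then extract the $t_0$-linear (respectively $z^0$) coefficient. The only difference is organizational: where you rederive the needed identity $\cJ^0(t_0,\underline u,z)/I_0=J\bigl((I_1+t_0)/I_0,z\bigr)$ directly from Theorem \ref{thm} via the cone and graph-over-$\cH^+$ properties, the paper simply cites Theorem \ref{cor1} (whose proof is exactly that argument), so this is the same computation traversed in the opposite direction.
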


\begin{proof}
By setting $t=0$ in Theorem \ref{cor1} and considering the extreme values for $\epsilon$, we compute 
\begin{align}\label{enumerative}
\nonumber \frac{I(\underline{u},z)}{I_0(\underline{u})}&=\cJ^\infty\left(\frac{I_1(\underline{u})}{I_0(\underline{u})},z\right)\\
&\nonumber=z\phi_0+\frac{I_1(\underline{u})}{I_0(\underline{u})}+\sum_{m,k}\frac{\phi^k}{m!}\left\langle \left(\frac{I_1(\underline{u})}{I_0(\underline{u})}\right)^m,\frac{\phi_k}{z-\psi_{m+1}}  \right\rangle_{m+1}^\infty\\
&=z\phi_0+z\sum_{m,k}\frac{\phi^k}{m!}\left\langle\phi_0, \left(\frac{I_1(\underline{u})}{I_0(\underline{u})}\right)^m,\frac{\phi_k}{z-\psi_{m+1}}  \right\rangle_{m+1}^\infty
\end{align}
where the last equality follows from the string equation in FJRW theory. The final expression \eqref{enumerative} is equal to the coefficient of $t_0$ in $zI_0(\underline{u})\cJ^\infty\left(\tau^0(t,\underline u)^{-1},z \right)$ and by Theorem \ref{cor1}, we have
\[
zI_0(\underline{u})\cJ^\infty\left(\tau^0(t,\underline u)^{-1},z \right)=z\cJ^0(t,\underline{u},z).
\]
The theorem follows by extracting the $t_0$ coefficient of $z\cJ^0(t,\underline{u},z)$.
\end{proof}

For all $\epsilon>0$ we can define the J-function by
\[
\text{J}^\epsilon(\underline u,z):=z\phi_0+z\sum_k \phi^k\left\langle\left\langle \phi_0,\frac{\phi_k}{z-\psi_2}\right\rangle\right\rangle^\epsilon_2(0,\underline u).
\]
By the string equation, $\text{J}^1(\underline u, z)$ is nothing more than the small FJRW $J$-function. Theorem \ref{cor4} then asserts that the usual mirror theorem in Corollary \ref{cor3} is obtained by tracking the generating series $\text{J}^\epsilon(\underline{u},z)$ as we vary $\epsilon$ from $1$ to $0$. In other words, up to a correction of $\phi_0$ by $I_0(\underline u)$, the $I$-function is equal to the J-function at $\epsilon=0$ without any change of variables.

\bibliographystyle{alpha}


\end{document}